\numberwithin{equation}{section}
\newtheorem{thm}{Theorem}[section]
\newtheorem{cor}[thm]{Corollary}
\newtheorem{lem}[thm]{Lemma}
\newtheorem{prop}[thm]{Proposition}
\newtheorem{exm}[thm]{Example}
\newtheorem{defn}[thm]{Definition}
\newtheorem{rem}[thm]{Remark}
\numberwithin{equation}{section}
\def\cc{\mathcal{ C}}
\def\cf{\mathcal{ F}}
\def\cv{\mathcal {V}}
\def\eb{{\mathbf{e}}}
\def\bbf{{\mathbb F}}
\def\bi{{\mathbb I}}
\def\bn{{\mathbb N}}
\def\bp{{\mathbb P}}
\def\a{\alpha}
\def\b{\beta}
  \def\G{\Gamma}
\def\d{\delta}
\def\l{\lambda}
\def\xb{{\mathbf{x}}}
\def\yb{{\mathbf{y}}}
\def\yb{{\mathbf{y}}}
\def\tb{{\mathbf{t}}}
\def\a{\alpha}
\begin{document}
    \title[Infiniteness of Orthogonal Preserving QSO]{Infinite Dimensional Orthogonal Preserving Quadratic Stochastic Operators}

   \author{Farrukh Mukhamedov}
\address{Farrukh Mukhamedov\\
    ,
 Department of Mathematical Sciences, \& College of Science,\\
The United Arab Emirates University, Al Ain, Abu Dhabi,\\
15551, UAE} \email{{\tt far75m@yandex.ru}, {\tt
farrukh.m@uaeu.ac.ae}}

\author{Ahmad Fadillah Embong}
\address{Ahmad Fadillah\\
 Department of Computational \& Theoretical Sciences\\
Faculty of Science, International Islamic University Malaysia\\
Kuantan, Pahang, Malaysia} \email{{\tt ahmadfadillah.90@gmail.com}}

    \begin{abstract}
In the present paper, we study infinite dimensional orthogonal
preserving quadratic stochastic operators (OP QSO). A full
description of OP QSOs in terms of their canonical form and heredity
coefficient's values is provided. Furthermore, some properties of OP
QSOs and their fixed points are studied.

\vskip 0.3cm \noindent {\it Mathematics Subject
           Classification}: 46L35, 46L55, 46A37.\\
        {\it Key words}: quadratic stochastic operators;  orthogonal preserving; infinite dimensional.
    \end{abstract}

\maketitle

\section{Introduction}

The history of quadratic stochastic operators (QSOs) is traced back
to Bernstein's work \cite{1} where such kind of operators appeared
from the problems of population genetics (see also \cite{11}). These
kind of operators describe time evolution of variety species in
biology and are represented by so-called Lotka-Volterra(LV) systems
\cite{29}, but currently in the present, there are many papers
devoted to these operators owing to the fact that they have
plentiful applications especially in modelings in many different
fields such as biology \cite{8,20} (population and disease
dynamics), physics \cite{21,25}(non-equilibrium statistical
mechanics) , economics, and mathematics \cite{11,20,25} (replicator
dynamics and games).

A quadratic stochastic operator is usually used to present the time
evolution of species in biology, which arises as follows. By
considering an evolution of species in biology as given in the
situation where  $I = \{1,2,\dots,n\}$ is the $n$ type of species
(or traits) in a population, the probability distribution of the
species in an early state of that population is $x^{(0)} =
(x_1^{(0)},\dots,x_n^{(0)})$. On a side note, we define $P_{ij,k}$
as the probability of an individual in the $i^{th}$ species and
$j^{th}$ species to cross-fertilize and produce an individual from
$k^{th}$ species (trait). Given $x^{(0)} =
(x_1^{(0)},\dots,x_n^{(0)})$, we can find the probability
distribution of the first generation, $x^{(1)} =
(x_1^{(1)},\dots,x_n^{(1)})$ by using a total probability, i.e.,
\begin{eqnarray*}
    x_k^{(1)}=\sum\limits_{i,j=1}^n P_{ij,k}x_i^{(0)}x_j^{(0)}, \ \
    k\in\{1,\dots,n\}. \label{operator}
\end{eqnarray*}

This relation defines an operator which is denoted by $V$ and it is
called \textit{quadratic stochastic operator (QSO)}. Each QSO maps
the simplex $S^{n-1} = \{\textbf{x}\in \mathbb{R}^{n}\quad |x_i\geq
0,\quad \sum\limits_{i=1}^{n}x_i = 1\}$ into itself. Moreover, the
operator $V$ can be interpreted as an evolutionary operator that
describes the sequence of generations in terms of probability
distributions if the values of $P_{ij,k}$ and the distribution of
the current generation are given. The most well-known class in the
theory QSO is a Volterra one, namely whose heredity coefficients
satisfy
\begin{eqnarray}
    \label{eqn_cond_Vol_lr}
    P_{ij,k} =0 \textmd{ if } k \notin \{i,j\}.
\end{eqnarray}
The condition \eqref{eqn_cond_Vol_lr}, biologically, means that each
individual can inherit only the species of the parents. The dynamics
of Volterra QSO was studied in \cite{R.gani_tournment,GGZ}.
Nevertheless, not all QSOs are of Volterra-type, therefore, the
understanding of the dynamics of non-Volterra QSO still remains
open. We refer the reader to \cite{6,MG2015} as the exposition of
the recent achievements and open problems in the theory of the QSO
can be further researched.

One of the main problems in the theory of nonlinear operator is to
study the limiting behavior of nonlinear operators. To this day,
there are a handful of studies dedicated to the exploration of the
dynamics of higher dimensional systems despite the fact that it is a
very exquisite and important topic. Although, most research has been
focused on the simplex $S^{n-1}$, but there are models where the
probability distribution is given on a countable set, which means
that the corresponding QSO is defined on an infinite-dimensional
space.

The simplest case of the infinite-dimensional space is the Banach
space $ \ell_{1} $ of absolutely summable sequences. It is worth
mentioning that some infinite dimensional QSOs were studied in
\cite{J2013,M2000,Far_Has_Temir}.

On the other hand, from \cite{man(2016)2d} with the results of
\cite{taha} we conclude that a QSO (acting on finite dimensional
simplex) is surjective, if and only if, it is orthogonal preserving
(OP) QSO. Here by the orthogonality of distributions we mean their
disjointness. We cannot afford to ignore the surjectivity of a
quadratic operator is strongly tied up with nonlinear optimization
problems \cite{Arut2012}. Furthermore, any orthogonal preserving QSO
is a permutation of Volterra QSO in\cite{Has_Far_OPQSO_e,taha}. Yet,
if we look at the same problem in the infinite dimensional setting,
the last statement becomes incorrect. Also in
\cite{Has_Far_OPQSO_e}, we have considered a special class of
orthogonal preserving operators for which an analogous result was
obtained replicated in the finite dimensional setting.
Unfortunately, this type of result is wrong in a general setting.
Therefore, in this paper, we go on a voyage of discovery in an
attempt to describe the orthogonality preserving infinite
dimensional quadratic stochastic operators in a general case. We
notice that every linear stochastic operators can be considered as a
particular case of QSO. In the later case, there are many papers tha
are devoted to the orthogonal preserving linear operators defined on
various Banach spaces (see for example \cite{AR,Bur,F,FS,Horn,W}),
once the nonlinearity appears in operators, then all existing
methods (for linear operators) are no longer applicable. The
simplest nonlinearity is quadratic which for these kinds of we fully
describe, OP QSOs in terms of the their heredity coefficients, and
provide their canonical forms. Last but not least, we provide
ceratin examples of such kind of operators along with the properties
of OP QSOs and their fixed points.

\section{Orthogonal Preserving QSO}

Let $E$ be a subset of $\bn$. Denote
\[ S^{E} = \left\{\textbf{x}=(x_i)_{\in E}\in \mathbb{R}^{E}\ : \ x_i\geq 0,\quad \sum\limits_{i\in E}x_i = 1\right\}. \]

In what follows, by $\eb_i$ we denote the standard basis in $S^E$,
i.e. $\eb_i=(\d_{ik})_{k\in E}$ ($i\in E$), where $\d_{ij}$ is the
Kroneker delta.

Let $V$ be a mapping defined by
\begin{eqnarray}\label{eqn_qso}
    V(\xb)_{k} = \sum\limits_{i,j\in E}P_{ij,k}x_ix_j, \ \ k
    \in E
\end{eqnarray}
here, $\{P_{ij,k}\}$ are hereditary coefficients which satisfy
\begin{eqnarray}\label{eqn_coef_cond}
    P_{ij,k}\geq0, \quad P_{ij,k}=P_{ji,k}, \quad
    \sum\limits_{k\in E1}P_{ij,k} = 1, \quad i,j,k\in E
\end{eqnarray}
One can see that $V$ maps $S^E$ into itself and $V$ is called
\textit{Quadratic Stochastic Operator (QSO)} \cite{M2000}.

By \textit{support} of $ \xb = (x_{i})_{i\in E}\in S^E $ we mean a
set $ Supp(\xb) = \left\{ i \in E \ : \  x_{i} \neq 0 \right\}. $ A
sequence  $ \{A_{k}\} $ of sets is called \textit{cover} of a set $
B $ if $ \bigcup\limits_{k=1}^{\infty}A_{k} = B$ and $ A_{i}\cap
A_{j} = \emptyset $ for $ i,j \in \bn $ ($i\neq j$).

Recall that two vectors $ \xb=(x_k), \yb=(y_k)$ belonging to $S^E $
are called \textit{orthogonal} (denoted by $ \xb \perp \yb $) if $
supp(\xb) \cap supp(\yb) = \emptyset $. If $ \xb,\yb\in S^E$, then
one can see that  $ \xb \perp \yb $ if and only if $ \xb \circ \yb =
0 $ (or $ x_{k} \cdot y_{k} =0 $ for all $ k \in E$). Here, $ \circ
$ stands for the standard dot product.

\begin{defn}
    A QSO $V$ given by \eqref{eqn_qso} is called \textit{orthogonal
        preserving QSO (OP QSO)} if for any $ \xb,\yb \in S $ with $ \xb
    \perp \yb $ one has $ V(\xb) \perp V(\yb) $.
\end{defn}

 Let $\mathbb{T}$ be a stochastic matrix given by
$(t_{ij})_{i,j\in E}$, where $t_{ij}\geq 0$, $\sum_{j\in E}t_{ij}=1$
($i\in\bn$). Then one can define a linear operator (which is called
linear stochastic operator (LSO))
\begin{eqnarray}\label{TT}
(T\xb)_k=(\xb\mathbb{T})_k=\sum_{i\in E}t_{ik}x_i, \ \ \xb\in S^E, \
\ k\in E.
\end{eqnarray}
Due to stochasticity of $\mathbb{T}$, the operator $T$ maps $S^E$
into itself. Note that each LSO can be considered as a particular
case of q.s.o. Indeed, let us define
\begin{equation}\label{VT}
 P_{ij,k}^{(T)}=\frac{t_{ik}+t_{jk}}{2}.
\end{equation} Then one can see that  $\{P_{ij,k}^{(T)}\}$ satisfies
\eqref{eqn_coef_cond}, and for the corresponding q.s.o. $V_T$ we
have
\begin{eqnarray*}
(V_T(\xb))_k&=&\sum_{i,j\in E} P_{ij,k}^{(T)}x_ix_j\\[2mm]
&=&\sum_{i,j\in E} \bigg(\frac{t_{ik}+t_{jk}}{2}\bigg)x_ix_j\\[2mm]
&=&\sum_{i\in E}t_{ik}x_i\\
&=&(T\xb)_k, \ \ \forall k\in\bn,
\end{eqnarray*}
i.e.  $V(\xb)=T\xb$ for all $\xb\in S^E$. This implies that all
results holding for QSO are valid for LSO.

\begin{rem}\label{OPT}
Let $T$ be a LSO, then for any $ \xb=(x_k) \in S^E $, $ T $ can be
written as follows
    \[ T(\xb) = \sum_{k\in E} x_k T(\eb_k). \]
Therefore, a LSO $ T $ defined on $S^E$ is orthogonal preserving if
and only if $T(\eb_{i})\perp T(\eb_{j})$ for all $i\neq j$. Indeed,
it is enough to show that the last statement implies OP of $T$. Let
us take $\xb,\yb\in S^E$ such that $\xb\perp \yb$ (i.e.
$\xb\circ\yb=0$). Then from
$$
T(\xb)=\sum_{\ell\in E} x_\ell T(\eb_\ell), \ \ \ T(\yb)=\sum_{m\in
E} y_mT(\eb_m)
$$
with $T(\eb_\ell)\circ T(\eb_m)=\d_{\ell m}$ we find
\begin{eqnarray*}
T(\xb)\circ T(\yb)&=&\bigg(\sum_{\ell\in E} x_\ell
T(\eb_\ell)\bigg)\circ \bigg(\sum_{m\in E}y_m
T(\eb_m)\bigg)\\[2mm]
&=&\sum_{\ell,m\in E} x_\ell y_m T(\eb_\ell)\circ
T(\eb_m)\\[2mm]
&=&\sum_{\ell\in E} x_\ell y_\ell\\
&=&0
\end{eqnarray*}
Now using \eqref{TT} we conclude that $T$ is OP if and only if for
the stochastic matrix $(t_{ij})$ one has $\tb_i\perp \tb_j$ for all
$i,j\in\bn$ with $i\neq j$. Here $\tb_k=(t_{ki})_{i\in E}$, $k\in E
$ for all $i\neq j$.
    \end{rem}

When we consider the QSO, then similar kind of result is not valid,
but we use some ideas from the mentioned remark.

\begin{rem} We first note that if $V$ is an OP QSO, then the system
$\{V(\eb_k)\}$ is also orthogonal. Therefore, to describe OP QSO it
is enough for us just to fix this (i.e. $\{V(\eb_k)\}$) system.
Indeed, let us denote by $ \cv $ the set of all OP QSO $V$ such that
$V(\eb_{k}) = \bbf_{k} $ for some orthogonal system $ \bbf_{k} $ in
$ S$. Now, let us assume that an OP QSO $ \widetilde{V} $ such that
    $ \widetilde{V}(\tilde\bbf_{k}) = \bbf_{k}^{'} $, where $ \{
    \tilde\bbf_{k} \} $ and $ \{ \bbf_{k}^{'} \} $ are orthogonal systems in $
    S$. On the other hand, if one considers $ \{\widetilde{V}(\eb_{k})\}
    $ then the system is also has to be orthogonal in $ S $ i.e.,
   $\widetilde{V}(\eb_{k}) = \bi_{k}$, where $ \{ \bi_{k} \} $ is an orthogonal system in $
    S $. Hence $ \widetilde{V} $ is an element of $ \cv $.
\end{rem}

Recall \cite{Far_Has_Temir} that a QSO $ V : S^E \rightarrow S^E $
is called \textit{Volterra} if one has
\begin{eqnarray}\label{eqn_cond_Vol}
    P_{ij,k} =0 \textmd{ if } k \notin \{i,j\}, \ \ i,j,k\in E.
\end{eqnarray}

\begin{rem}
In \cite{Far_Has_Temir} it was given an alternative definition
Volterra operator in terms of extremal elements of $S^E$.
\end{rem}

One can check \cite{MG2015} that a QSO $V$ is Volterra if and only
if one has
 \[ (V(\xb))_{k} = x_{k} \left( 1 + \sum\limits_{i\in E}a_{ki}x_{i} \right), \ \ k\in E, \]
    where $ a_{ki} = 2P_{ik,k}-1 $ ($ i,k \in E $. One can see that $ a_{ki}=-a_{ik} $.
This representation leads us to the following definitions.

\begin{defn}
    A QSO $V : S^{E} \rightarrow S^{E} $ is called $ \pi $-Volterra if there is a permutation $ \pi $ of $E$ such that $ V $ has the following
    form
    \[ V(\xb)_{k} = x_{\pi(k)} \left( 1 + \sum\limits_{i\in E}a_{\pi(k)i}x_{i} \right) \]
    where $ a_{\pi(k)i} = 2P_{i\pi(k),k}-1 $, $ a_{\pi(k)i}=-a_{i\pi(k)} $ for any $ i,k \in E$.
\end{defn}

In \cite{Has_Far_OPQSO_e,taha} it has been proved the following
result.

\begin{thm}[\cite{Has_Far_OPQSO_e,taha}]\label{VV} Let
$E=\{1,2,\dots,n\}$ and $ V $ be a QSO on $ S^{E} $. Then the
following statements are
    equivalent:
    \begin{itemize}
        \item[(i)] $ V $ is orthogonal preserving;
        \item[(ii)] $ V $ is $ \pi $-Volterra QSO.
    \end{itemize}
\end{thm}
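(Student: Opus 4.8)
The plan is to establish the two implications separately; (ii) $\Rightarrow$ (i) is routine, and the real content is (i) $\Rightarrow$ (ii). For (ii) $\Rightarrow$ (i): if $V$ is $\pi$-Volterra then each coordinate $V(\xb)_k=x_{\pi(k)}\bigl(1+\sum_{i\in E}a_{\pi(k)i}x_i\bigr)$ is a scalar multiple of $x_{\pi(k)}$ (the bracket being nonnegative on $S^E$ since $V$ is a QSO). Hence if $\xb\perp\yb$, so that $x_{\pi(k)}y_{\pi(k)}=0$ for every $k$, then $V(\xb)_kV(\yb)_k=0$ for all $k$, i.e.\ $V(\xb)\perp V(\yb)$.

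For (i) $\Rightarrow$ (ii) I would argue in two steps. \emph{Step 1 (extracting a permutation).} Applying OP to the pairwise orthogonal basis $\{\eb_k\}$ gives that the vectors $V(\eb_k)$ have pairwise disjoint supports; since $V(\eb_k)_m=P_{kk,m}$, this says the sets $S_k:=\{m\in E:P_{kk,m}>0\}$ are pairwise disjoint, and each is nonempty because $\sum_{m\in E}P_{kk,m}=1$. As $E$ has exactly $n$ elements and $S_1,\dots,S_n$ are $n$ disjoint nonempty subsets of $E$, each $S_k$ must be a singleton $\{\sigma(k)\}$, so $P_{kk,\sigma(k)}=1$. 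Disjointness makes $\sigma$ injective, hence a permutation of $E$; I would then set $\pi:=\sigma^{-1}$.

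\emph{Step 2 (killing the mixed coefficients).} Suppose $n\ge 3$, fix $i\neq j$ in $E$, and take any $\ell\in E\setminus\{i,j\}$. Then $\tfrac12(\eb_i+\eb_j)\perp\eb_\ell$, so by OP $V\bigl(\tfrac12(\eb_i+\eb_j)\bigr)\perp V(\eb_\ell)$; since $V(\eb_\ell)_{\sigma(\ell)}=P_{\ell\ell,\sigma(\ell)}=1$, this forces $V\bigl(\tfrac12(\eb_i+\eb_j)\bigr)_{\sigma(\ell)}=0$, that is, $\tfrac14P_{ii,\sigma(\ell)}+\tfrac12P_{ij,\sigma(\ell)}+\tfrac14P_{jj,\sigma(\ell)}=0$. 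Nonnegativity of the heredity coefficients then gives $P_{ij,\sigma(\ell)}=0$. Letting $\ell$ run over $E\setminus\{i,j\}$ and using that $\sigma$ is a bijection, I obtain $P_{ij,k}=0$ for every $k$ with $\pi(k)\notin\{i,j\}$. Together with $P_{kk,\sigma(k)}=1$ and $\sum_{m}P_{ij,m}=1$ (which forces $P_{ij,\sigma(i)}+P_{ij,\sigma(j)}=1$, hence the antisymmetry of the $a$'s), the Volterra representation recalled before the theorem shows that $V$ is $\pi$-Volterra. The degenerate cases $n\le 2$ need no mixed argument: there is no index outside $\{i,j\}$, so no mixed coefficient is restricted, and the recalled representation already exhibits $V$ in $\pi$-Volterra form.

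The main obstacle is conceptual rather than computational, and it is entirely concentrated in Step 1: the pigeonhole fact that $n$ pairwise disjoint nonempty subsets of an $n$-element set must all be singletons. This is the single place the finiteness of $E$ is used, and its failure for infinite $E$ is precisely what makes the general description pursued in the rest of the paper nontrivial.
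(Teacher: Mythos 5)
Your proof is correct. Note that the paper does not prove this theorem itself --- it is quoted from the references \cite{Has_Far_OPQSO_e,taha} --- but your argument is exactly in the spirit of the techniques the authors deploy for the infinite-dimensional analogues in Section 3 (Theorems 3.1 and 3.4): there too one feeds the operator specially chosen orthogonal pairs, such as a vector with two coordinates equal to $\tfrac12$ against a vector supported on the complement, and uses nonnegativity of the heredity coefficients to force individual $P_{ij,k}$ to vanish. Your Step 1 pigeonhole observation (that $n$ pairwise disjoint nonempty subsets of an $n$-element set are singletons, so each $V(\eb_k)$ is a vertex) is indeed the precise point where finiteness enters and whose failure on $S^{\bn}$ motivates the whole paper; the only cosmetic issue is the diagonal convention $a_{\pi(k)\pi(k)}=0$, which you handle implicitly and which the paper itself imposes elsewhere.
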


In what follows, for the sake of convenience we denote $S$ instead
of $S^\bn$.

\begin{rem}
We notice that the vertices of the finite simplex $ S^{E} $
($E=\{1,2,\dots,n\}$) are described by the elements $ \eb_{k} =
(\d_{ik})_{i\in E} $. Therefore, any OP QSO on $ S^{n-1} $ is a
permutated Volterra QSO (see Theorem \ref{VV}). However, if we
consider $ S $, then one can see that there are many orthogonal
systems in $S$, which differ from the system $ \{ \eb_{k} \} $. For
example
\begin{eqnarray}\label{eqn_basic_1/2}
&&\bbf_{1}^{(1/2)} = \left( \frac{1}{2}, \frac{1}{2}, 0, \dots
\right), \bbf_{2}^{(1/2)} = \left( 0,0,\frac{1}{2}, \frac{1}{2}, 0,
\dots \right),\nonumber\\[2mm]
&& \dots, \bbf_{k}^{(1/2)} = \left(
0,0,\dots,\underbrace{\frac{1}{2} }_{2k-1}, \underbrace{\frac{1}{2}
}_{2k}, 0, \dots \right),\dots
\end{eqnarray}
Another crucial moment is that for a given orthogonal system $ \{
\bbf_{k}\} $ in $S$, the set
$$ \bigcup_{k}^{\infty}
(supp(\bbf_{k})) $$ may not equal to $\bn$. For example, we have $
\cup_{k=2}^{\infty} (supp(\eb_{k}))=\bn\setminus\{1\} $. All these
make the description of OP QSOs is more challenging than the finite
dimensional setting.
\end{rem}

In \cite{Has_Far_OPQSO_e} a special class of infinite dimensional OP
 QSOs have been studied for which an analogous of Theorem \ref{VV}
 holds.

\begin{thm}[\cite{Has_Far_OPQSO_e}] \label{thm_far_has_infi}
Let $V $ be a QSO on $ S $ such that $ V(\eb_{i})=\eb_{\pi(i)} $ for
some permutation $ \pi : \bn \rightarrow \bn $. Then $ V $ is an OP
QSO if and only if $ V $ is $ \pi $-Volterra QSO.
\end{thm}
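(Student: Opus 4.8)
The plan is to prove the equivalence in both directions, exploiting the special structure $V(\eb_i)=\eb_{\pi(i)}$ to pin down most of the heredity coefficients, and then to use orthogonality on mixed vertices to force the remaining Volterra structure.

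First I would handle the easy direction, $\pi$-Volterra $\Rightarrow$ OP. If $V$ has the $\pi$-Volterra form $V(\xb)_k=x_{\pi(k)}\bigl(1+\sum_{i\in\bn}a_{\pi(k)i}x_i\bigr)$, then for any $\xb$ one has $\mathrm{supp}(V(\xb))\subseteq\pi^{-1}(\mathrm{supp}(\xb))$, because the $k$-th coordinate of $V(\xb)$ vanishes whenever $x_{\pi(k)}=0$. Hence if $\xb\perp\yb$, i.e. $\mathrm{supp}(\xb)\cap\mathrm{supp}(\yb)=\emptyset$, then $\pi^{-1}(\mathrm{supp}(\xb))\cap\pi^{-1}(\mathrm{supp}(\yb))=\emptyset$ since $\pi$ is a bijection, so $V(\xb)\perp V(\yb)$. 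This uses only that $\pi$ is a permutation of $\bn$; no further properties of the $a_{ij}$ are needed.

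The substance is the converse: assume $V$ is OP and $V(\eb_i)=\eb_{\pi(i)}$, and deduce $P_{ij,k}=0$ whenever $k\notin\{\pi^{-1}(i),\pi^{-1}(j)\}$, which (after re-indexing with $a_{\pi(k)i}=2P_{i\pi(k),k}-1$) is exactly the $\pi$-Volterra condition. The starting point is $V(\eb_i)=\eb_{\pi(i)}$, which says $P_{ii,k}=\d_{k,\pi^{-1}(i)}$, i.e. $P_{ii,k}=0$ unless $k=\pi^{-1}(i)$. Next, for $i\neq j$ consider the vector $\xb=\tfrac12\eb_i+\tfrac12\eb_j$. Then $V(\xb)_k=\tfrac14 P_{ii,k}+\tfrac12 P_{ij,k}+\tfrac14 P_{jj,k}$, so $\mathrm{supp}(V(\xb))=\{\pi^{-1}(i),\pi^{-1}(j)\}\cup\{k:P_{ij,k}\neq0\}$. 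Now pick any third index $\ell\neq i,j$ (this is where infinite dimension, or at least $|E|\geq3$, is used) and set $\yb=\eb_\ell$; then $V(\yb)=\eb_{\pi(\ell)}$. Since $\xb\perp\yb$ we get $V(\xb)\perp V(\yb)$, hence $P_{ij,\pi(\ell)}=0$ for every $\ell\notin\{i,j\}$. As $\ell$ ranges over $\bn\setminus\{i,j\}$ and $\pi$ is a bijection, $\pi(\ell)$ ranges over $\bn\setminus\{\pi(i),\pi(j)\}$; wait—I need $k\notin\{\pi^{-1}(i),\pi^{-1}(j)\}$, so instead I should take $\yb=\eb_{\pi(\ell)}$ with $V(\eb_{\pi(\ell)})=\eb_{\pi^{-1}(\pi(\ell))}$; more cleanly, for a target index $k\notin\{\pi^{-1}(i),\pi^{-1}(j)\}$ set $m=\pi(k)$, so $m\notin\{i,j\}$, take $\yb=\eb_m$, use $\eb_m\perp\xb$ and $V(\eb_m)=\eb_{\pi(m)}=\eb_k$, and conclude from $V(\xb)_k\,\cdot\,V(\eb_m)_k=0$ that $P_{ij,k}=0$. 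That establishes the Volterra-type vanishing; combined with the normalization $\sum_k P_{ij,k}=1$ and $P_{ij,k}\geq0$ one reads off the $\pi$-Volterra representation with $a_{\pi(k)i}=2P_{i\pi(k),k}-1$, and the antisymmetry $a_{\pi(k)i}=-a_{i\pi(k)}$ follows from $P_{ii,k}=\d_{k,\pi^{-1}(i)}$ exactly as in the known finite-dimensional computation.

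The main obstacle I anticipate is purely bookkeeping with the permutation: keeping straight whether a given coefficient constraint lives at index $k$, $\pi(k)$, or $\pi^{-1}(k)$, and making sure the ``third index'' argument is applied to the correct simplex vertices so that the orthogonality of images really forces $P_{ij,k}=0$ for \emph{all} $k$ outside the allowed pair rather than just some. There is no analytic difficulty—everything reduces to evaluating $V$ on the explicit vectors $\tfrac12\eb_i+\tfrac12\eb_j$ and $\eb_m$—so the write-up should be a careful but routine index chase, and one should double-check that it genuinely uses $|E|\geq3$ (for $|E|\le2$ there is no room for a disjoint third support and the statement can fail).
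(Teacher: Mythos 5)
Your proposal is essentially correct, and it is worth noting that the paper itself gives no proof of this statement (it is quoted from the reference \cite{Has_Far_OPQSO_e}); the closest internal comparison is the paper's proof of its general Theorem \ref{thm_des_OP_su_neq_N}, which uses the same orthogonality-testing technique but with the complementary choice of test vectors: there one pairs $\eb_k$ with a vector $\xb_k$ having \emph{all} coordinates positive except the $k$-th, and reads off $P_{ij,m}=0$ for all $i,j\neq k$ and $m\in supp(V(\eb_k))$, whereas you pair the two-point vector $\tfrac12\eb_i+\tfrac12\eb_j$ with a single vertex $\eb_m$. The two are logically equivalent here (each kills exactly the coefficients $P_{ij,k}$ with $k$ outside the permitted pair), and your version has the small advantage of isolating one coefficient per test. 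Both directions of your argument are sound: the forward implication only needs the factor $x_{\pi(k)}$ in the $\pi$-Volterra form plus injectivity of the permutation, and the converse correctly combines $P_{ii,k}=V(\eb_i)_k$ with the vanishing forced by orthogonality, after which normalization gives the canonical form and the antisymmetry of the $a$'s. The one blemish is the recurring $\pi$ versus $\pi^{-1}$ slip (e.g.\ writing $P_{ii,k}=\d_{k,\pi^{-1}(i)}$ where the hypothesis $V(\eb_i)=\eb_{\pi(i)}$ gives $\d_{k,\pi(i)}$, and setting $m=\pi(k)$ while then using $\pi(m)=k$); you flag this yourself as the main hazard, and in fairness it mirrors an inconsistency already present in the source: with the paper's literal definition of $\pi$-Volterra one has $V(\eb_i)=\eb_{\pi^{-1}(i)}$, so the theorem as stated is only notationally consistent if one reads ``$\pi$-Volterra'' with $\pi^{-1}$ in place of $\pi$ (or restricts to involutive $\pi$). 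A clean write-up should fix one convention at the outset and carry it through; with that done, your argument is complete.
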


Recall that an orthogonal basis $ \{\bbf_{k}\}_{k=1}^{\infty} $ in $
S $ is called \textit{total} if for any $ \xb \in S $ one finds $ \{
\l_{i} \}_{i=1}^{\infty}, \l \geq 0 , \sum_{i=1}^{\infty} \l_{i} =1$
such that
\[ \xb = \sum\limits_{i=1}^{\infty} \l_{i}\bbf_{i} \]

\begin{thm}
    Let $\{\bbf_{k}\}_{k=1}^{\infty} $ be an orthogonal basis in $ S $. The following conditions are equivalent
    \begin{itemize}
        \item[(i)] $ \{\bbf_{i}\}_{i=1}^{\infty} $ is total;
        \item[(ii)] For every $ k \in\bn  $ one has $ |supp(\bbf_{k})| =1 $ and $ \cup_{k=1}^{\infty} supp(\bbf_{k}) =\bn $.
    \end{itemize}
\end{thm}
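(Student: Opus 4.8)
The plan is to prove the two implications separately. The decisive structural fact, used in both directions, is that orthogonality of $\{\bbf_k\}$ means the sets $supp(\bbf_k)$ are pairwise disjoint, so no cancellation can occur in a series $\sum_i \l_i \bbf_i$ with $\l_i\ge 0$: coordinate-wise, for each $m\in\bn$ at most one term contributes, so the $m$-th coordinate of the sum equals $\l_{i(m)}(\bbf_{i(m)})_m$ when $m$ lies in the (unique) support $supp(\bbf_{i(m)})$ containing it, and is $0$ otherwise. In particular, whenever the series converges (in $\ell_1$, hence coordinate-wise), one has $supp\big(\sum_i\l_i\bbf_i\big)=\bigcup_{i:\,\l_i>0}supp(\bbf_i)$.

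For $(ii)\Rightarrow(i)$: if $|supp(\bbf_k)|=1$ for every $k$, write $supp(\bbf_k)=\{i_k\}$; since $\bbf_k\in S$ this forces $\bbf_k=\eb_{i_k}$, orthogonality forces $k\mapsto i_k$ to be injective, and the hypothesis $\bigcup_k supp(\bbf_k)=\bn$ makes it a bijection of $\bn$. Thus $\{\bbf_k\}$ is just a reindexing of $\{\eb_j\}_{j\in\bn}$. Then for arbitrary $\xb=(x_i)\in S$ put $\l_k:=x_{i_k}$: nonnegativity is clear, $\sum_k\l_k=\sum_{i\in\bn}x_i=1$, and, reindexing the absolutely convergent series, $\sum_k\l_k\bbf_k=\sum_k x_{i_k}\eb_{i_k}=\sum_{i\in\bn}x_i\eb_i=\xb$. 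Hence $\{\bbf_k\}$ is total.

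For $(i)\Rightarrow(ii)$: fix $j\in\bn$ and apply totality to $\eb_j$, so $\eb_j=\sum_i\l_i\bbf_i$ with $\l_i\ge 0$, $\sum_i\l_i=1$. By the no-cancellation identity, $\{j\}=supp(\eb_j)=\bigcup_{i:\,\l_i>0}supp(\bbf_i)$; since not all $\l_i$ vanish, some index $i(j)$ has $\l_{i(j)}>0$, whence $\emptyset\ne supp(\bbf_{i(j)})\subseteq\{j\}$, i.e. $supp(\bbf_{i(j)})=\{j\}$ and $\bbf_{i(j)}=\eb_j$ (as $\bbf_{i(j)}\in S$). This gives $\{\eb_j:j\in\bn\}\subseteq\{\bbf_k:k\in\bn\}$, which immediately yields $\bigcup_k supp(\bbf_k)=\bigcup_j\{j\}=\bn$. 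Finally, if some $\bbf_{k_0}$ had $|supp(\bbf_{k_0})|\ge 2$, choose $j\in supp(\bbf_{k_0})$; then $\bbf_{k_0}$ and $\eb_j=\bbf_{i(j)}$ share $j$ in their supports, so orthogonality forces $k_0=i(j)$, contradicting $supp(\bbf_{k_0})\ne\{j\}=supp(\eb_j)$. Hence every $\bbf_k$ has singleton support, establishing $(ii)$.

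The only point needing genuine care — and what I would single out as the main (though mild) obstacle — is justifying the no-cancellation identity for the infinite convex combination: one must observe that $\ell_1$-convergence (or merely coordinate-wise convergence) of $\sum_i\l_i\bbf_i$, together with $\l_i\ge0$ and pairwise disjoint supports, forces each coordinate of the limit to originate from a single summand, so that the support of the limit is exactly the union of the supports of the terms that actually appear. Once this is secured, both implications are short, and the theorem may be read as the statement that, up to reindexing, the standard basis $\{\eb_k\}$ is the \emph{only} total orthogonal basis of $S$.
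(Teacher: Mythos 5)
Your proof is correct and follows essentially the same route as the paper: both directions rest on applying totality to the standard basis vectors $\eb_m$ and exploiting the pairwise disjointness of the supports, with $(ii)\Rightarrow(i)$ reducing to the observation that the system is a reindexing of $\{\eb_k\}$. Your explicit statement of the no-cancellation identity $supp\big(\sum_i\l_i\bbf_i\big)=\bigcup_{i:\,\l_i>0}supp(\bbf_i)$ merely makes precise a step the paper uses implicitly, so there is no substantive difference in approach.
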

\begin{proof}
    (i) $ \Rightarrow $ (ii).
    Assume contrary i.e., there exists some $ k_{0} \in \bn $ such that $ |supp(\bbf_{k_{0}})| \geq 2 $.
    Now, take $ m \in supp(\bbf_{k_{0}}) $.
    If one considers $ \eb_{m} \in S $, then due to the totality of $ \{\bbf_{k}\} $ we have
    \[ \eb_{m} = \sum\limits_{i} \l_{i}\bbf_{i} \]
    This means that $ \l_{i} =0 $ for $ i \neq k_{0} $, so $ \eb_{m} = \bbf_{k_{0}} $, which contradicts
     to $ |supp(\bbf_{k_{0}})| \geq 2 $. Now, if $\cup_{k=1}^{\infty} supp(\bbf_{k}) \subset \bn $, then for
     $ \ell \in \bn \backslash \cup_{k=1}^{\infty} supp(\bbf_{k})  $, the vector $ \eb_{\ell} $ can not be represent as a convex combination of $ \{\bbf_{k}\} $.
     Hence, we infer the statement (ii).

    (ii)  $ \Rightarrow $  (i). If (ii) holds, then the system $ \{ \bbf_{k} \}_{k=1}^{\infty} $ is a permutation of the standard basis
    $ \{ e_{k} \}_{k=1}^{\infty} $, which is clearly total.
\end{proof}

From the last theorem and Theorem \ref{thm_far_has_infi} we conclude
the following result.
\begin{cor}
    Let $ \{ \bbf_{k} \}_{k=1}^{\infty}$ be an orthogonal system in $ S $ and $ V $ is an OP QSO on $S$ such that $ V(\eb_{k}) = \bbf_{k} $,  for all $ k \in \bn $.
    Then the following statements are equivalent
    \begin{itemize}
        \item[(i)] $ V$ is an $ \pi- $Volterra QSO;
        \item[(ii)] $ \bbf_{k} $ is total.
    \end{itemize}
\end{cor}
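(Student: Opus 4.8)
The plan is to chain together the two results quoted just above: the characterization of total orthogonal bases (the ``last theorem'') and Theorem~\ref{thm_far_has_infi} (the finite-support analogue of Theorem~\ref{VV}). Since $V$ is assumed to satisfy $V(\eb_k)=\bbf_k$ for all $k\in\bn$, the hypotheses of Theorem~\ref{thm_far_has_infi} are \emph{almost} met: that theorem requires $V(\eb_i)=\eb_{\pi(i)}$ for a permutation $\pi$, i.e. it presupposes that each $\bbf_k$ is itself a standard basis vector. So the strategy splits according to whether $\{\bbf_k\}$ has that special form.

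First, for (ii) $\Rightarrow$ (i): suppose $\{\bbf_k\}_{k=1}^\infty$ is total. By the ``last theorem'' (the equivalence (i)$\Leftrightarrow$(ii) there), totality forces $|\mathrm{supp}(\bbf_k)|=1$ for every $k$ and $\cup_k\mathrm{supp}(\bbf_k)=\bn$; hence there is a permutation $\pi$ of $\bn$ with $\bbf_k=\eb_{\pi(k)}$, and so $V(\eb_k)=\eb_{\pi(k)}$. Now Theorem~\ref{thm_far_has_infi} applies verbatim: since $V$ is OP by hypothesis, it must be a $\pi$-Volterra QSO, which is exactly (i).

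Next, for (i) $\Rightarrow$ (ii): suppose $V$ is a $\pi$-Volterra QSO for some permutation $\pi$ of $\bn$. Reading off the defining formula $V(\xb)_k=x_{\pi(k)}\big(1+\sum_{i\in E}a_{\pi(k)i}x_i\big)$, one evaluates at $\xb=\eb_m$: the factor $x_{\pi(k)}$ is $\d_{\pi(k)m}$ and the bracket becomes $1+a_{\pi(k)m}$, which is $1$ when $\pi(k)=m$; hence $V(\eb_m)_k=\d_{\pi(k)m}$, i.e. $V(\eb_m)=\eb_{\pi^{-1}(m)}$. Thus $\bbf_k=V(\eb_k)=\eb_{\pi^{-1}(k)}$, so $\{\bbf_k\}$ is a permutation of the standard basis; in particular each $\bbf_k$ has singleton support and the supports exhaust $\bn$. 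By the ``last theorem'' this means $\{\bbf_k\}$ is total, giving (ii).

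The only genuinely delicate point is the bookkeeping of indices in the implication (i) $\Rightarrow$ (ii): one must be careful that the ``$\pi$'' appearing as the permutation witnessing $\pi$-Volterra is the one with $V(\eb_k)=\eb_{\pi^{-1}(k)}$ (equivalently $V(\eb_{\pi(k)})=\eb_k$), so that it indeed matches the permutation required in Theorem~\ref{thm_far_has_infi}; conflating $\pi$ with $\pi^{-1}$ would be the easy slip. Everything else is a direct quotation of the two preceding results, so no substantial new obstacle arises — the corollary is essentially a repackaging, and the proof can be kept to a few lines.
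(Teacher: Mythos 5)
Your proof is correct and follows exactly the route the paper intends: the corollary is stated as an immediate consequence of the totality characterization and Theorem~\ref{thm_far_has_infi}, and you simply fill in the chaining (including the correct observation that a $\pi$-Volterra QSO sends $\eb_m$ to $\eb_{\pi^{-1}(m)}$, so the index bookkeeping works out). No discrepancy with the paper's argument.
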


\section{Description of OP QSOs}

In this section, we are going to describe infinite dimensional OP
QSOs.

Let $V$ be a QSO on $S$ whose heredity coefficients are
$\{P_{ij,k}\}$. Let us introduce the following vectors
\[ \bp_{ij} = (P_{ij,1}, \cdots, P_{ij,n}, \cdots) \ \ \textmd{ for any } i,j \in \bn\]
One can see that for every $i,j\in\bn$ the vector $\bp_{ij}$ belongs
to $S$. Next result describes OP QSOs in terms of the vectors
$\{\bp_{ij}\}$.

\begin{thm}\label{PPV1}
    Let $ V $ be a QSO. Then the following conditions are
    equivalent:
    \begin{itemize}
        \item[(i)] $ V $ is an OP QSO;
        \item[(ii)] For any $ A,B \subset \bn $ with $ A \cap B = \emptyset $ one has $ \bp_{ij} \perp \bp_{uv} $ for all $ i,j \in A $ and $ u,v \in B
        $.
    \end{itemize}
\end{thm}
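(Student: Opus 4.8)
The plan is to translate both conditions into statements about supports, exploiting the fact that \emph{no cancellation} can occur in the defining sum of $V$ because every heredity coefficient is nonnegative. Concretely, for any $\xb\in S$ one has $V(\xb)=\sum_{i,j\in\bn}x_ix_j\,\bp_{ij}$, which is a convex combination of nonnegative vectors since $\sum_{i,j}x_ix_j=\big(\sum_i x_i\big)\big(\sum_j x_j\big)=1$. Since all summands are nonnegative, for each $k$ one has $V(\xb)_k=\sum_{i,j}P_{ij,k}x_ix_j>0$ if and only if $P_{ij,k}>0$ for some $i,j\in supp(\xb)$; that is,
\[
supp\big(V(\xb)\big)=\bigcup_{i,j\in\, supp(\xb)}supp(\bp_{ij}).
\]
This identity is the only real ingredient, and I expect it to be the one place requiring care; the rest is bookkeeping, and the infinite-dimensionality of $S$ plays no essential role since at each stage only finitely many indices are involved.

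For the implication (ii)$\Rightarrow$(i), I would take $\xb,\yb\in S$ with $\xb\perp\yb$ and set $A=supp(\xb)$, $B=supp(\yb)$, so that $A\cap B=\emptyset$. If $V(\xb)$ and $V(\yb)$ were not orthogonal, there would be an index $k$ lying in both supports; by the support identity there would exist $i,j\in A$ and $u,v\in B$ with $k\in supp(\bp_{ij})\cap supp(\bp_{uv})$, contradicting the hypothesis $\bp_{ij}\perp\bp_{uv}$. Hence $V(\xb)\perp V(\yb)$, i.e.\ $V$ is an OP QSO.

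For the implication (i)$\Rightarrow$(ii), fix disjoint $A,B\subset\bn$ and pick arbitrary $i,j\in A$ and $u,v\in B$. Rather than constructing vectors whose supports equal $A$ and $B$, it is cleaner to test $V$ on vectors with at most two-point support: put $\xb=\tfrac12(\eb_i+\eb_j)$ if $i\neq j$ and $\xb=\eb_i$ if $i=j$, and similarly $\yb=\tfrac12(\eb_u+\eb_v)$ or $\yb=\eb_u$. Then $supp(\xb)\subseteq A$ and $supp(\yb)\subseteq B$, so $\xb\perp\yb$, and (i) gives $V(\xb)\perp V(\yb)$. Expanding, $V(\xb)$ is a combination of $\bp_{ii},\bp_{ij},\bp_{jj}$ with a strictly positive weight on $\bp_{ij}$ (using $P_{ij,k}=P_{ji,k}$), so by nonnegativity $supp(\bp_{ij})\subseteq supp\big(V(\xb)\big)$, and likewise $supp(\bp_{uv})\subseteq supp\big(V(\yb)\big)$. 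Since these two supports are disjoint, so are $supp(\bp_{ij})$ and $supp(\bp_{uv})$, i.e.\ $\bp_{ij}\perp\bp_{uv}$, as required.

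The main (and only) subtlety is the support identity of the first paragraph, specifically the "no cancellation" observation: it is precisely here that $P_{ij,k}\geq 0$ is used, and the statement would break down for signed coefficients. Once that is established, both directions are short and direct.
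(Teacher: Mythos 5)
Your proof is correct, and it rests on the same two ingredients as the paper's: the decomposition $V(\xb)=\sum_{i,j}x_ix_j\,\bp_{ij}$ and the no-cancellation observation that nonnegativity of the coefficients forces $supp(V(\xb))=\bigcup_{i,j\in supp(\xb)}supp(\bp_{ij})$. The only real difference is in (i)$\Rightarrow$(ii): the paper picks test vectors $\xb,\yb$ with $supp(\xb)=A$ and $supp(\yb)=B$ exactly, expands $V(\xb)\circ V(\yb)=\sum_{i,j\in A}\sum_{u,v\in B}\bigl(\bp_{ij}\circ\bp_{uv}\bigr)x_ix_jy_uy_v=0$, and reads off $\bp_{ij}\perp\bp_{uv}$ for all pairs at once from the positivity of the weights; you instead test one pair $(i,j),(u,v)$ at a time on the two-point-support vectors $\tfrac12(\eb_i+\eb_j)$ and $\tfrac12(\eb_u+\eb_v)$. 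Your variant is marginally cleaner, since it avoids the (unstated in the paper) step of exhibiting a probability vector whose support equals an arbitrary, possibly infinite, subset $A\subset\bn$, and it localizes the argument so that no interchange of an infinite double sum with the dot product needs to be justified. Both directions are sound as written.
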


\begin{proof} (i)$\Rightarrow$(ii). Take any $A,B \subset \bn $ with $ A \cap B = \emptyset$. Then chose two elements $\xb,\yb\in S$ such that
$supp(\xb)=A$ and $supp(\yb)=B$. From the condition  $A \cap B
=\emptyset$ one concludes that $ \xb\perp \yb $.

From the definition of QSO, we have
    \begin{eqnarray}\label{eqn_form_V_in_supp}
    V(\xb) = \left( \sum\limits_{i,j \in supp(\xb)}P_{ij,k}x_{i}x_{j} \right)_{k=1}^{\infty},  &
    V(\yb) = \left( \sum\limits_{u,v \in supp(\yb)}P_{uv,k}y_{u}y_{v} \right)_{k=1}^{\infty}.
    \end{eqnarray}
Due to the orthogonal preserving property of $ V $ one has $ V(\xb)
\circ V(\yb) =0 $, therefore one gets
    \begin{eqnarray}\label{Vperp}
    V(\xb)\circ  V(\yb)&=& \sum\limits_{k=1}^{\infty}
    \left( \sum\limits_{i,j \in supp(\xb)}P_{ij,k}x_{i}x_{j} \right)
    \left( \sum\limits_{u,v \in supp(\yb)}P_{uv,k}y_{u}y_{v} \right) \nonumber \\
    &=& \sum\limits_{i,j \in supp(\xb)} \sum\limits_{u,v \in supp(\yb)}
    \left( \sum\limits_{k=1}^{\infty} P_{ij,k} P_{uv,k} \right)
    x_{i}x_{j}y_{u}y_{v}\nonumber\\[2mm]
    &=&0
    \end{eqnarray}

According to $ i,j \in supp(\xb) $ and $ u,v \in supp(\yb) $ (i.e.,
$ x_{i}>0, y_{u}>0 $ for any $ i \in supp(\xb) $ and $ u \in
supp(\yb) $) from the last equalities, we conclude that
    \[ \sum\limits_{k=1}^{\infty} P_{ij,k} P_{uv,k} = 0 \]
which means $\bp_{ij} \circ \bp_{uv} =0$ for all $ i,j \in A $ and $
u,v \in B$.

Now let us prove (ii)$\Rightarrow$ (i).
    Now, take $ \xb, \yb \in S $ such that $ \xb \perp \yb $, then from \eqref{Vperp} one finds
     \begin{eqnarray}
    V(\xb) \circ V(\xb) = \sum\limits_{i,j \in supp(\xb)} \sum\limits_{u,v \in supp(\yb)}
    \left(\bp_{ij}\circ\bp_{uv}\right) x_{i}x_{j}y_{u}y_{v}
    \end{eqnarray}
    Due the fact $ supp(\xb) \cap supp(\yb) = \emptyset $
    and the assumption (ii) we immediately obtain $ V(\xb) \circ V(\xb) =0$, i.e. $V(\xb) \perp
    V(\xb)$. This completes the proof.
\end{proof}

From this theorem we immediately get the following corollary.

\begin{cor}\label{PPV2}
    Let $ V $ be an OP QSO, then for any $i\neq j$ ($i,j\in\bn$) one
    has $ \bp_{ii} \perp \bp_{jj} $.
\end{cor}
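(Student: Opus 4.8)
The plan is to derive Corollary \ref{PPV2} as a direct specialization of Theorem \ref{PPV1}. Since $V$ is assumed to be an OP QSO, condition (ii) of Theorem \ref{PPV1} is available to us: for any disjoint subsets $A,B\subset\bn$ and all $i,j\in A$, $u,v\in B$, one has $\bp_{ij}\perp\bp_{uv}$.

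First I would fix arbitrary $i\neq j$ in $\bn$ and make the obvious choice $A=\{i\}$ and $B=\{j\}$. These are disjoint since $i\neq j$. Then the only admissible index pairs drawn from $A$ are $(i,i)$, and from $B$ are $(j,j)$, so Theorem \ref{PPV1}(ii) yields precisely $\bp_{ii}\perp\bp_{jj}$. That is the entire argument — there is essentially no obstacle here, as the corollary is just the ``diagonal'' instance of the theorem with singleton sets.

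If one wants a self-contained phrasing avoiding the explicit invocation of (ii), the alternative is to go back to the defining orthogonality property: take $\xb=\eb_i$ and $\yb=\eb_j$, which are orthogonal since $i\neq j$. Then $V(\eb_i)=\bp_{ii}$ and $V(\eb_j)=\bp_{jj}$ directly from \eqref{eqn_qso} (only the $(i,i)$ term survives in the first, only $(j,j)$ in the second), and OP of $V$ gives $V(\eb_i)\perp V(\eb_j)$, i.e. $\bp_{ii}\perp\bp_{jj}$. Either route is a one-line specialization; I would present the first since it most cleanly exhibits the corollary as following ``immediately'' from the theorem, matching the text's framing.
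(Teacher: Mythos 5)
Your proposal is correct and matches the paper's approach: the paper derives Corollary \ref{PPV2} ``immediately'' from Theorem \ref{PPV1} precisely by taking the singleton sets $A=\{i\}$, $B=\{j\}$, which is your primary argument. Your alternative direct derivation via $V(\eb_i)=\bp_{ii}$ is also valid and is essentially the same computation unwound.
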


\begin{rem} If a QSO $V$ is given by a stochastic matrix (see
\eqref{VT}) then from Corollary \ref{PPV2} we infer that $V$ is OP
if and only if $\tb_i\perp \tb_j$ for all $i,j\in\bn$ ($i\neq j$).
This recovers the result of Remark \ref{OPT}.
\end{rem}

One can infer that from Theorem \ref{PPV1} it is difficult to write
representation of OP QSO. Therefore, for a given OP QSO $V$ we
denote $\bbf_{k}=V(\eb_k)$, $k\in\bn$. The system $\cf=\{\bbf_{k}\}$
is orthogonal. In what follows, we denote $ \bbf_{k} =
(f_{k,i})_{i\in\bn}$. One can see that $f_{k,i}=0$ if $i\notin
supp(\bbf_{k})$.

%Put $ \bbf_{k} = \left( \underbrace{0,\dots,0}_{l_{1}-1}, f_{k,l_{1}},0,\dots,0,f_{k,l_{n}}, 0,\dots \right)  $. Henceforth, $ |A| $ is referring to the cardinality of the set $ A $ and the set $ \bc $ is representing the complement of $ \bn $ with respect to the union of $ supp(\bbf_{k}) $ i.e.,
%\[ \bc = \left\{ c_{i} \in \bn; \{c_{i}\}\cap supp(\bbf_{k})=\emptyset \ \ \ \textmd{for any} \  k \in \bn \right\} \]

%such the support of $ \bbf_{k} $ is finite and

 Henceforth, $
 |A| $ is referred to the cardinality of a set $ A $ and denote
 \[ \cc_{\cf}=\bn\setminus\bigg(\bigcup_{k\in\bn} supp(\bbf_{k})\bigg)
 \]

\begin{thm}\label{thm_des_OP_su_neq_N} Let $\cf=\{\bbf_{k}\}$ be an
orthogonal system and $ V $ be a QSO on $S$ such that $ V(\eb_{k}) =
\bbf_{k}$, $k \in \bn$. Then, $ V(\xb) $ is an OP QSO if and only if
it has the following form: for any $ \xb \in S $
\begin{itemize}
\item[(a)] for any $ m \in supp(\bbf_{k}) $ \[ (V(\xb))_{m} = x_{k} \left( f_{k,m} + \sum\limits_{i=1}^{\infty} a_{ik}^{(m)}x_{i} \right ) \]
where $ a_{ik}^{(m)} = 2 P_{ik,m} - f_{k,m} $ and set  $
a_{kk}^{(m)}=0 $.
\item[(b)] for any $ c \in \cc_\cf $, $ V(\xb)_{c} $ takes one of the following form
        \begin{itemize}
            \item[(I)] if there is no $ P_{ij,c} > 0 $ for every $ i,j \in \bn $, then $ V(\xb)_{c}=0 $ or
            \item[(II)] if there exists at least one $ P_{i_{c}j_{c},c} > 0 $, then $ V(\xb)_{c} $ has one of the following form:
            \begin{itemize}
                \item[(i)] if there is no $ P_{ij,c}>0 $ for $j \in \{i_{c},j_{c} \} $ where $ i \in \bn \backslash \{j\} $, then
                \[ V(\xb)_{c} = 2 P_{i_{c}j_{c},c}x_{i_{c}}x_{j_{c}} \]
                \item[(ii)] if there exists $ P_{i_{c_{0}}j,c}>0 $ for either $ j=i_{c} $ or $ j=j_{c} $ (here let $ j=i_{c} $), then $ V(\xb)_{c} $ has one of the following form:
                \begin{itemize}
                    \item[(1)] if $ P_{i_{c_{0}}j_{c},c}>0 $ then
                    \[ V(\xb)_{c}=2\left( P_{i_{c}j_{c},c}x_{i_{c}}x_{j_{c}} + P_{i_{c_{0}}j_{c},c}x_{i_{c_{0}}}x_{j_{c}} + P_{i_{c_{0}}i_{c},c}x_{i_{c_{0}}}x_{i_{c}} \right)  \]
                    \item[(2)] if $ P_{i_{c_{0}}j_{c},c}=0 $ then
                    \[ V(\xb)_{c}= 2 x_{i_{c}} \left( P_{i_{c}j_{c},c}x_{j_{c}}+ \sum\limits_{\stackrel{i=1}{i \neq i_{c},j_{c}}}^{\infty}P_{ii_{c},c}x_{i} \right)  \]
                \end{itemize}
            \end{itemize}
        \end{itemize}
\end{itemize}
\end{thm}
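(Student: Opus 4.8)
The plan is to derive the stated normal forms by analyzing, coordinate by coordinate, the constraints that the orthogonal-preserving (OP) property imposes on the heredity coefficients, using Theorem \ref{PPV1} and Corollary \ref{PPV2} as the main tools. First I would handle part (a), the coordinates $m$ lying in some $supp(\bbf_k)$. The key observation is that if $m\in supp(\bbf_k)$ then $m\notin supp(\bbf_\ell)$ for $\ell\neq k$ (orthogonality of $\cf$), and $(V(\eb_k))_m = P_{kk,m}=f_{k,m}>0$, while $(V(\eb_\ell))_m = P_{\ell\ell,m}=0$ for $\ell\neq k$; by Corollary \ref{PPV2} this forces, via the vectors $\bp_{\ell\ell}$, that any cross term $P_{i\ell,m}$ with $\ell\neq k$ vanishes whenever it would conflict with $\bp_{kk}\perp\bp_{\ell\ell}$. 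More precisely, applying Theorem \ref{PPV1} with $A=\{k\}$, $B=\{\ell\}$ for $\ell\neq k$ gives $\bp_{kk}\perp\bp_{\ell\ell}$ so $P_{\ell\ell,m}=0$; iterating over pairs one extracts that the only surviving contributions to $(V(\xb))_m$ are those with at least one index equal to $k$. Writing $(V(\xb))_m=\sum_{i,j}P_{ij,m}x_ix_j$ and isolating the $i=k$ or $j=k$ terms (using symmetry $P_{ij,m}=P_{ji,m}$) yields $(V(\xb))_m = x_k\big(f_{k,m}\sum_i x_i + \sum_{i\neq k}(2P_{ik,m}-f_{k,m})x_i\big)$, which, since $\sum_i x_i=1$, collapses to the claimed formula $(V(\xb))_m = x_k(f_{k,m}+\sum_i a^{(m)}_{ik}x_i)$ with $a^{(m)}_{ik}=2P_{ik,m}-f_{k,m}$ and $a^{(m)}_{kk}=0$. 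The reverse implication is a direct substitution check.

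For part (b), the coordinates $c\in\cc_\cf$, I would argue by a case split exactly mirroring the statement's tree. Since $c\notin supp(\bbf_k)$ for every $k$, we have $P_{kk,c}=f_{k,c}=0$ for all $k$, so no diagonal vector $\bp_{kk}$ charges coordinate $c$; hence the constraints from Corollary \ref{PPV2} are vacuous at $c$, and all the information comes from off-diagonal applications of Theorem \ref{PPV1}. Case (I): if $P_{ij,c}=0$ for all $i,j$ then trivially $(V(\xb))_c=0$. Case (II): pick one pair with $P_{i_cj_c,c}>0$. Subcase (i): if no other pair sharing an index with $\{i_c,j_c\}$ has positive $c$-coefficient, I would show that in fact no pair at all can have positive $c$-coefficient except $\{i_c,j_c\}$ itself — because any disjoint pair $\{u,v\}$ with $P_{uv,c}>0$ would violate $\bp_{i_cj_c}\perp\bp_{uv}$ (take $A\supseteq\{i_c,j_c\}$, $B\supseteq\{u,v\}$ disjoint in Theorem \ref{PPV1}), and a pair overlapping $\{i_c,j_c\}$ is excluded by hypothesis; this gives $(V(\xb))_c=2P_{i_cj_c,c}x_{i_c}x_{j_c}$. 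Subcase (ii): here there is an overlapping pair, say $P_{i_{c_0}i_c,c}>0$ (so $i_c,i_{c_0},j_c$ are constrained to lie in one common "block"); then splitting on whether $P_{i_{c_0}j_c,c}>0$ or $=0$, and invoking Theorem \ref{PPV1} to kill every pair disjoint from $\{i_c,i_{c_0},j_c\}$ (resp. from $\{i_c,j_c\}$), produces the three-term expression in (1) or the factored expression in (2). In each subcase the converse is immediate since the displayed $(V(\xb))_c$ is supported only on coordinates that are "tied" to a fixed finite index set, so orthogonality is preserved.

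The main obstacle, I expect, is the bookkeeping in case (b)(ii): one must argue carefully that positivity of $P_{i_{c_0}i_c,c}$ together with the OP constraints forces all indices $i$ with $P_{i i_c,c}>0$ (and, in subcase (1), all indices appearing) to be pairwise "non-separable" — i.e. they cannot be split into two disjoint groups $A,B$ with both carrying $c$ — and then to translate this combinatorial clustering condition back into the exact algebraic shape claimed, distinguishing (1) from (2) according to whether the third cross-coefficient $P_{i_{c_0}j_c,c}$ is positive. The subtlety is that Theorem \ref{PPV1} only forbids $\bp_{ij}\perp\bp_{uv}$ failing when $\{i,j\}$ and $\{u,v\}$ can be separated by disjoint $A,B$; one has to verify that the configurations permitted in (1) and (2) are precisely those that cannot be so separated, which requires checking a small number of index patterns. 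Everything else — the part (a) computation and all the converse directions — is routine substitution into \eqref{eqn_qso} together with $\sum_i x_i=1$ and $supp(\xb)\cap supp(\yb)=\emptyset$.
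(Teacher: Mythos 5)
Your proposal is correct and takes essentially the same route as the paper: the paper kills the unwanted coefficients by testing $V$ on explicit orthogonal pairs (for instance $\eb_{k}$ against a vector supported off $\{k\}$, and $\xb^{(i_{c},j_{c})}$ against a vector supported on the complementary indices), which is exactly the content of Theorem \ref{PPV1} that you invoke instead, and your case tree for the coordinates $c\in\cc_{\cf}$ coincides with the paper's. The only difference is organizational --- you route each vanishing statement through Theorem \ref{PPV1} with a suitable disjoint pair $A,B$ (e.g.\ $A=\{k\}$, $B=\{i,j\}$ for part (a), where $P_{kk,m}=f_{k,m}>0$ makes the orthogonality of $\bp_{kk}$ and $\bp_{ij}$ bite) rather than re-deriving each instance with test vectors, which is a legitimate shortcut.
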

\begin{proof} Let us start with "if" part, i.e. we assume that $ V $ is an OP QSOs. From the assumption $ V(\eb_{k})
= \bbf_{k} $ and the definition of QSO we have
\[ V(\eb_{k})  = (P_{kk,1}, \dots, P_{kk,m}, \dots )  = \bbf_{k} \]
This implies that
\begin{eqnarray}\label{eqn_P_kk_neq_N}
P_{kk,m} =
\left\{
\begin{array}{ll}
0 & \textmd{ if } m \notin supp(\bbf_{k}), \\
f_{k,m} & \textmd{ if } m \in supp(\bbf_{k}),
\end{array}
\right.
\end{eqnarray}
By choosing
\begin{eqnarray}\label{eqn_cho_x_k}
\xb_{k} = (x_1, \dots, x_{k-1},0, x_{k+1}, \dots) \textmd{ such that } x_{i}>0, \textmd{ for  } i \in \bn \backslash \{k\}
\end{eqnarray}
and $ \eb_{k} $ one has $ \xb_{k} \perp \eb_{k} $. Due to the
assumption, we infer that $ V(\xb_{k}) \perp V(\eb_{k}) $.
%means $ V(\xb_{k}) \circ V(e_{k}) = 0 $.
It is clear that
\[ V(\xb_k) = \left( \sum\limits_{i,j \neq k}^{\infty} P_{ij,1}x_{i}x_{j}, \dots, \sum\limits_{i,j \neq k}^{\infty} P_{ij,m}x_{i}x_{j}, \dots  \right). \]
Thus, from the fact $ V(\xb_{k}) \circ V(\eb_{k}) = 0 $ and
\eqref{eqn_cho_x_k}, we immediately find
\[ \sum\limits_{ij \neq k}^{\infty
} P_{ij,m} x_{i}x_{j} = 0 \ \ \Rightarrow P_{ij,m} =0 \textmd{ for any } i,j \neq k \ \textmd{ and } \ m \in supp(\bbf_{k}). \]
Hence, for any $  m\in supp(\bbf_{\pi(k)}) $ and for any $ \xb \in S $
\begin{eqnarray}
V(\xb)_{m} &=& \sum\limits_{i,j=1}^{\infty}P_{ij,m}x_{i}x_{j} \nonumber \\
&=& P_{kk,m}x_{k}^{2}+ \sum\limits_{i \neq k }^{\infty} P_{ik,m}x_{i}x_{k} + \sum\limits_{j \neq k }^{\infty} P_{kj,m}x_{j}x_{k} \nonumber
\end{eqnarray}
Keeping in mind $ P_{ik,m} = P_{ki,m}, x_{k}=1-\sum\limits_{i\neq
k}^{\infty}x_{i}  $ and \eqref{eqn_P_kk_neq_N}, $ (V(\xb))_{m} $
reduces to
\begin{eqnarray}
V(\xb)_{m} = x_{k}\left( f_{k,m} + \sum\limits_{i \neq k }^{\infty} \left(2P_{ik,m} - f_{k,m}\right)x_{i}\right) \nonumber
=  x_{k} \left( f_{k,m} + \sum\limits_{i=1}^{\infty} a_{ik}^{(m)}x_{i} \right )
\end{eqnarray}
which shows (a).

Next, let us consider $ c \in \cc_\cf$. Then
\begin{eqnarray}
(V(\xb))_{c} &=& \sum\limits_{i,j=1}^{\infty}P_{ij,c}x_{i}x_{j} \nonumber \\
&=& \sum\limits_{i=1}^{\infty}P_{ii,c}x_{i}^{2} + \sum\limits_{i\neq 1}^{\infty}P_{i1,c}x_{1}x_{i}+ \dots + \sum\limits_{i\neq n}^{\infty}P_{in,c}x_{n}x_{i}+ \dots \nonumber \\
&=& \sum\limits_{i=1}^{\infty}P_{ii,c}x_{i}^{2} + 2\sum\limits_{i= 2}^{\infty}P_{i1,c}x_{1}x_{i}+ \dots + 2\sum\limits_{i= n}^{\infty}P_{in,c}x_{n}x_{i}+ \dots \nonumber \\
&=& \sum\limits_{i=1}^{\infty}P_{ii,c}x_{i}^{2} + 2\sum\limits_{j= 1}^{\infty}\sum\limits_{i= j+1}^{\infty}P_{ij,c}x_{i}x_{j} \nonumber
\end{eqnarray}
Taking into account
\eqref{eqn_P_kk_neq_N}, one gets $ %\sum_{c\in \bc}
P_{kk,c}=0$ for any $ k\in \bn $ and $ c \in \cc_\cf $. Therefore
\begin{eqnarray} \label{eqn_simpl_V_c}
V(\xb)_{c} = 2\sum\limits_{j= 1}^{\infty}\sum\limits_{i= j+1}^{\infty}P_{ij,c}x_{i}x_{j}
\end{eqnarray}

First, we assume that there exist $ i_{c},j_{c} \in \bn $ such that
$ P_{i_{c}j_{c},c} > 0 $ (if it is not the case, then we get (I)
i.e., $ V(\xb)_{c}=0 $). Next, let us choose two vectors from the
simplex $ S $ as follows
\begin{eqnarray*}
&& \xb^{(i_{c}, j_{c})} = \left( 0,\dots ,0
,\underbrace{\dfrac{1}{2}}_{i_{c}^{th}}, 0,\dots,0,
\underbrace{\dfrac{1}{2}}_{j_{c}^{th}},0  \right) \\[2mm]
&&\yb^{[i_{c}, j_{c}]} = \left( y_{1}, \dots, y_{i_{c}-1}, 0,
y_{i_{c}+1},\dots, y_{j_{c}-1},0,y_{j_{c}+1}, \dots \right)
\end{eqnarray*}
 where $
y_{i}>0 $ for any $ i\in \bn\backslash \{ i_{c},j_{c} \} $. Clearly
$ \xb^{(i_{c}, j_{c})} $ is orthogonal to $ \yb^{[i_{c}, j_{c}]} $,
hence by assumption on $ V $
\begin{eqnarray} \label{eqn_per}
V\left( \xb^{(i_{c}, j_{c})}\right) \perp V\left( \yb^{[i_{c}, j_{c}]}\right)
\end{eqnarray}
From the part (a), one gets
\[ V\left( \xb^{(i_{c}, j_{c})}\right)_{k} \cdot V\left( \yb^{[i_{c}, j_{c}]}\right)_{k} = 0 \ \ \forall \ k \ \in \bigcup\limits_{i \in \bn} supp(\bbf_{i}) \]

Using \eqref{eqn_simpl_V_c}, one has
\[ V\left( \xb^{(i_{c}, j_{c})}\right)_{c} = \dfrac{1}{2}P_{i_{c}j_{c},c} \ \ \textmd{ and } V(\yb^{[i_{c}, j_{c}]})_{c} = 2\sum\limits_{\stackrel{j=1}{\stackrel{j\neq i_{c}}{j\neq j_{c}}}}^{\infty}\sum\limits_{ \stackrel{i= j+1}{\stackrel{i\neq i_{c}}{i\neq j_{c}}} }^{\infty}P_{ij,c}y_{i}y_{j} \]

Due to \eqref{eqn_per} and the assumption $ P_{i_{c}j_{c},c}>0 $ one
infers that $ V\left( \xb^{(i_{c}, j_{c})}\right)_{c} \cdot
V(\yb^{[i_{c}, j_{c}]})_{c} = 0 $  whence
\begin{eqnarray} \label{eqn_P_ij,c=0}
P_{ij,c} = 0, \ \ \forall \ i,j \in \bn \backslash  \{ i_{c},j_{c}
\}
\end{eqnarray}

Moreover, we are interested to find the following coefficients
\[ P_{ii_{c},c}, \ \ P_{ij_{c},c} \ \ \textmd{ for all } i \in \bn \backslash \{ i_{c}, j_{c} \} \]
Furthermore, we assume, there exists $ i_{c_{0}} $ such that $
P_{i_{c_{0}}j,c}>0 $ for either $ j=i_{c} $ or $ j=j_{c} $ (here let
$ j=i_{c} $) (if it is not the case, then $ V(\xb)_{c} =
P_{i_{c}j_{c},c}x_{i_{c}}y_{j_{c}} $ which gives (i)). Without the
loss of generality, we may consider $ i_{c_{0}} < i_{c} $. Next, let
us choose
\begin{eqnarray*}
&& \xb^{(i_{c_{0}}, i_{c})} = \left( 0,\dots ,0
,\underbrace{\dfrac{1}{2}}_{i_{c_{0}}^{th}}, 0,\dots,0,
\underbrace{\dfrac{1}{2}}_{i_{c}^{th}},0  \right)\\[2mm]
&& \yb^{[i_{c_{0}}, i_{c}]} = \left( y_{1}, \dots, y_{i_{c_{0}-1}},
0, y_{i_{c_{0}+1}},\dots, y_{i_{c}-1},0,y_{i_{c}+1}, \dots \right)
\end{eqnarray*}

Using the facts from \eqref{eqn_simpl_V_c} and \eqref{eqn_P_ij,c=0},
one finds
\[ V\left( \xb^{(i_{c}, j_{c})}\right)_{c} = \dfrac{1}{2}P_{i_{c_{0}}i_{c},c} \ \ \textmd{ and } V(\yb^{[i_{c_{0}}, i_{c}]})_{c} = 2\sum\limits_{ \stackrel{i= 1}{\stackrel{i\neq i_{c_{0}}}{ \stackrel{i\neq i_{c}}{i \neq j_{c}} }} }^{\infty}P_{ij_{c},c}y_{i}y_{j_{c}} \]

Hence, by the same argument as before $ P_{ij_{c},c} = 0 $ for any $
i \in \bn \backslash \{ i_{c_{0}}, i_{c} \} $. Here, we consider two
subcases:

\textbf{Case 1.} Let $ P_{i_{c_{0}}j_{c},c} >0 $. By the same
argument as before and choosing
\begin{eqnarray*}
&&\xb^{(i_{c_{0}}, j_{c})} = \left( 0,\dots ,0
,\underbrace{\dfrac{1}{2}}_{i_{c_{0}}^{th}}, 0,\dots,0,
\underbrace{\dfrac{1}{2}}_{j_{c}^{th}},0  \right) \\[2mm]
&&\yb^{[i_{c_{0}}, j_{c}]} = \left( y_{1}, \dots, y_{i_{c_{0}-1}},
0, y_{i_{c_{0}+1}},\dots, y_{j_{c}-1},0,y_{j_{c}+1}, \dots \right)
\end{eqnarray*}
we obtain $ P_{ii_{c},c} = 0 $ for any $ i \in \bn \backslash \{
i_{c_{0}}, j_{c} \} $. Therefore, in this case, we can write $
V(\xb)_{c} $ in the  form as  given by (1).
%\[ 2\left( P_{i_{c}j_{c}}x_{i_{c}}x_{j_{c}} + P_{i_{c_{0}}j_{c}}x_{i_{c_{0}}}x_{j_{c}} + P_{i_{c_{0}}i_{c}}x_{i_{c_{0}}}x_{i_{c}} \]

\textbf{Case 2.} In this case, we suppose that $
P_{i_{c_{0}}j_{c},c} =0 $. Then, it is clear that we find (2).\\

Now let us turn to "only if" part. This part comes directly from the
fact $ \xb \perp \yb $, i.e. $ x_{k}\cdot y_{k}=0$  for all $ k\in
\bn $. The orthogonality of $ \xb $ and $ \yb $ implies that, for
any fixed $ k\in \bn $, either $ x_{k} =0 $ or $ y_{k}=0 $.
Therefore, if $ m\in supp(\bbf_{\pi(k)}), k \in \bn $, then from (a)
one finds $V(\xb)_{m}\cdot V(\yb)_{m} = 0$.

Using (b) one can check that we have
\[ V(\xb)_{c}\cdot V(\yb)_{c} = 0 \ \ \textmd{ for all } c\in \cc_\cf \]
This completes the proof.
\end{proof}

We point out that if $\cf=\{\bbf_{k}\}$ is an orthogonal system,
then for any injective mapping $\pi:\bn\to\bn$, the system
$\cf_\pi=\{\bbf_{\pi(k)}\}$ is also orthogonal. Hence, the previous
theorem will still remain valid for $\{\bbf_{\pi(k)}\}$.

\begin{cor}\label{thm_des_OP_su_neq_N}
Let $\cf=\{\bbf_{k}\}$ be an orthogonal system and $ V $ be a QSO
such that $ V(\eb_{k}) = \bbf_{\pi(k)} $, $k\in\bn$, for some
injective mapping $ \pi : \bn \rightarrow \bn $ Then, $ V(\xb) $ is
an OP QSO if and only if it has the following form, for any $ \xb
\in S $:
\begin{itemize}
\item[(a)] For any $ m \in supp(\bbf_{\pi(k)}) $ \[ V(\xb)_{m} = x_{k} \left( f_{\pi(k),m} + \sum\limits_{i=1}^{\infty} a_{ik}^{(m)}x_{i} \right ) \]
where $ a_{ik}^{(m)} = 2 P_{ik,m} - f_{\pi(k),m} $ and set  $
a_{kk}^{(m)}=0 $.
\item[(b)] For any $ c \in \cc_{\cf_\pi} $, $ V(\xb)_{c} $ takes one of the following form
        \begin{itemize}
            \item[(I)] If there is no $ P_{ij,c} > 0 $ for every $ i,j \in \bn $, then $ V(\xb)_{c}=0 $ or
            \item[(II)] If there exist at least one $ P_{i_{c}j_{c},c} > 0 $, then $ V(\xb)_{c} $ has one of the following form:
            \begin{itemize}
                \item[(i)] If there is no $ P_{ij,c}>0 $ for $j \in \{i_{c},j_{c} \} $ where $ i \in \bn \backslash \{j\} $, then
                \[ V(\xb)_{c} = 2 P_{i_{c}j_{c},c}x_{i_{c}}x_{j_{c}} \]
                \item[(ii)] If there exist $ P_{i_{c_{0}}j,c}>0 $ for either $ j=i_{c} $ or $ j=j_{c} $ (here let $ j=i_{c} $), then $ V(\xb)_{c} $ has one of the following form:
                \begin{itemize}
                    \item[(1)] If $ P_{i_{c_{0}}j_{c},c}>0 $ then
                    \[ V(\xb)_{c}=2\left( P_{i_{c}j_{c},c}x_{i_{c}}x_{j_{c}} + P_{i_{c_{0}}j_{c},c}x_{i_{c_{0}}}x_{j_{c}} + P_{i_{c_{0}}i_{c},c}x_{i_{c_{0}}}x_{i_{c}} \right)  \]
                    \item[(2)] If $ P_{i_{c_{0}}j_{c},c}=0 $ then
                    \[ V(\xb)_{c}= 2 x_{i_{c}} \left( P_{i_{c}j_{c},c}x_{j_{c}}+ \sum\limits_{\stackrel{i=1}{i \neq i_{c},j_{c}}}^{\infty}P_{ii_{c},c}x_{i} \right)  \]
                \end{itemize}
            \end{itemize}
        \end{itemize}
\end{itemize}
\end{cor}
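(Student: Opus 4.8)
The plan is to reduce this corollary to the already-proven Theorem~\ref{thm_des_OP_su_neq_N} by exploiting the observation — made just after that theorem — that if $\cf=\{\bbf_k\}$ is orthogonal then so is $\cf_\pi=\{\bbf_{\pi(k)}\}$ for any injective $\pi:\bn\to\bn$. The idea is to introduce the reindexed system $\gg=\{\gg_k\}$ defined by $\gg_k:=\bbf_{\pi(k)}$, so that the hypothesis $V(\eb_k)=\bbf_{\pi(k)}$ becomes $V(\eb_k)=\gg_k$, which is exactly the hypothesis of Theorem~\ref{thm_des_OP_su_neq_N} applied to the orthogonal system $\gg$. Writing $\gg_k=(g_{k,m})_{m\in\bn}$, one has $g_{k,m}=f_{\pi(k),m}$, and $\supp(\gg_k)=\supp(\bbf_{\pi(k)})$, while $\cc_{\gg}=\bn\setminus\bigcup_k\supp(\gg_k)=\bn\setminus\bigcup_k\supp(\bbf_{\pi(k)})=\cc_{\cf_\pi}$. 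Hence every occurrence of $f_{k,m}$, $\supp(\bbf_k)$, $\cc_\cf$ in Theorem~\ref{thm_des_OP_su_neq_N} translates verbatim into $f_{\pi(k),m}$, $\supp(\bbf_{\pi(k)})$, $\cc_{\cf_\pi}$, which are precisely the quantities appearing in the statement of the corollary.

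The key steps, in order, would be: (1) Fix the injective $\pi$ and set $\gg_k=\bbf_{\pi(k)}$; invoke the remark preceding the corollary to record that $\gg=\{\gg_k\}$ is an orthogonal system. (2) Observe that the hypothesis $V(\eb_k)=\bbf_{\pi(k)}$ is literally $V(\eb_k)=\gg_k$, so Theorem~\ref{thm_des_OP_su_neq_N} applies with $\cf$ replaced by $\gg$. (3) Apply that theorem: $V$ is an OP QSO iff for every $m\in\supp(\gg_k)$ one has $V(\xb)_m=x_k\bigl(g_{k,m}+\sum_{i=1}^\infty a_{ik}^{(m)}x_i\bigr)$ with $a_{ik}^{(m)}=2P_{ik,m}-g_{k,m}$ and $a_{kk}^{(m)}=0$, and for every $c\in\cc_\gg$ the value $V(\xb)_c$ has one of the forms (I), (II)(i), (II)(ii)(1), (II)(ii)(2). (4) Substitute back $g_{k,m}=f_{\pi(k),m}$, $\supp(\gg_k)=\supp(\bbf_{\pi(k)})$, $\cc_\gg=\cc_{\cf_\pi}$; this yields exactly statements (a) and (b) of the corollary, since the case list in part (b) does not involve the system $\cf$ at all (it is phrased purely in terms of the coefficients $P_{ij,c}$ and the index $c$).

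I do not expect a genuine obstacle here: the corollary is the functorial transport of Theorem~\ref{thm_des_OP_su_neq_N} along a relabeling of basis vectors, and the only thing to check is that the relabeling $\eb_k\mapsto\bbf_{\pi(k)}$ is compatible with the bookkeeping of supports and of the set $\cc$. The one point deserving a sentence of care is that $\pi$ is only assumed \emph{injective}, not bijective, so $\bigcup_k\supp(\bbf_{\pi(k)})$ may be a proper subset of $\bigcup_k\supp(\bbf_k)$ and $\cc_{\cf_\pi}$ may be strictly larger than $\cc_\cf$; but this causes no trouble because Theorem~\ref{thm_des_OP_su_neq_N} is stated for an arbitrary orthogonal system and makes no surjectivity assumption, so applying it to $\gg=\{\bbf_{\pi(k)}\}$ is legitimate regardless. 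Thus the proof is essentially: ``Apply Theorem~\ref{thm_des_OP_su_neq_N} to the orthogonal system $\{\bbf_{\pi(k)}\}$ and rename,'' and I would write it out in roughly that length, spelling out the identifications $g_{k,m}=f_{\pi(k),m}$ and $\cc_\gg=\cc_{\cf_\pi}$ explicitly.
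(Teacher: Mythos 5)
Your proposal is correct and is exactly the paper's route: the authors give no separate proof, only the remark immediately preceding the corollary that $\{\bbf_{\pi(k)}\}$ is again an orthogonal system for any injective $\pi$, so Theorem \ref{thm_des_OP_su_neq_N} applies verbatim to the reindexed system. Your added observation that injectivity (rather than bijectivity) of $\pi$ is harmless because the theorem assumes nothing about the union of supports is a worthwhile clarification, but the argument is the same.
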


An immediate consequence of the theorem is the following result.

\begin{cor}\label{cor_OP_hereCo}
Let $\cf=\{\bbf_{k}\}$ be an orthogonal system and $ V $ be a QSO
such that
    $ V(\eb_{k}) = \bbf_{\pi(k)} $, $k\in\bn$, for some injective
mapping $ \pi : \bn \rightarrow \bn $. Then $ V(\xb) $ is an OP QSO
if and only if the
    heredity coefficients  $ P_{ij,k} $ satisfy the following ones:
\begin{itemize}
\item[(a)] $ P_{ii,k} =f_{\pi(i),k} \ \textmd{for}\ k \in supp(\bbf_{\pi(i)}),\textmd{ and } \ \ P_{ij,k}=0 \ \textmd{ for } k \notin \{ supp(\bbf_{\pi(i)}) \cup supp(\bbf_{\pi(j)}) \}  $
\item[(b)] The coefficients  $ P_{ij,c} $, where $ c \in \cc_{\cf_\pi} $, satisfy one of the following ones:
\begin{itemize}
\item[(I)] $ P_{ij,c}=0 $ for all $ i,j \in \bn $ or
\item[(II)] If there exist $ P_{i_{c}j_{c},c} > 0 $, then $ P_{ij,c}=0 $ for any $ i,j \in \bn\backslash \{ i_{c},j_{c} \} $. Further, the other coefficients must satisfy one of the following,
\begin{itemize}
\item[(i)] $ P_{ij,c}=0 $ for $ j\in\{i_{c},j_{c}\} $ for all $ i \in \bn\backslash \{ j \}  $ or
\item[(ii)] If there exist $ P_{i_{c_{0}}j,c}>0 $ for either $ j=i_{c} \textmd{ or } j=j_{c} $ (here we let $ j=i_{c} $), then $ P_{ij_{c},c}=0 $ for any $ i \in \bn\backslash \{ i_{c},j_{c},i_{c_{0}} \} $. Moreover one of the following must be satisfied:

\begin{itemize}

  \item[(1)] $ P_{i_{c_{0}}j_{c},c}>0 $, then $ P_{ii_{c},c}=0 $ for any $ i \in \bn \backslash \{ i_{c},j_{c},i_{c_{0}} \} $ or
  \item[(2)] $ P_{i_{c_{0}}j_{c},c}=0 $

\end{itemize}
\end{itemize}
\end{itemize}
\end{itemize}
\end{cor}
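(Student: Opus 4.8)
The plan is to deduce Corollary~\ref{cor_OP_hereCo} directly from Theorem~\ref{thm_des_OP_su_neq_N} (in its $\pi$-twisted version, i.e. the preceding Corollary). The point is that the Corollary is merely a restatement of the same characterization of OP QSO, but phrased as conditions on the heredity coefficients $P_{ij,k}$ rather than as formulas for $V(\xb)_m$ and $V(\xb)_c$. So the proof is a translation exercise: I will show that the two lists of conditions are equivalent clause by clause, using the standard identification $V(\eb_k)=(P_{kk,1},P_{kk,2},\dots)=\bbf_{\pi(k)}$, which already appeared in the proof of Theorem~\ref{thm_des_OP_su_neq_N}.

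First I would treat part (a). Plugging $\xb=\eb_k$ into the formula in part~(a) of the Corollary to Theorem~\ref{thm_des_OP_su_neq_N} gives $(V(\eb_k))_m=f_{\pi(k),m}$ for $m\in supp(\bbf_{\pi(k)})$, i.e. $P_{kk,m}=f_{\pi(k),m}$, which is the first assertion in (a). Conversely, the vanishing assertion $P_{ij,k}=0$ for $k\notin supp(\bbf_{\pi(i)})\cup supp(\bbf_{\pi(j)})$ is exactly what was proved in the body of Theorem~\ref{thm_des_OP_su_neq_N}: there it was shown (via the orthogonality $\xb_k\perp\eb_k$ with $\xb_k$ supported off $k$) that $P_{ij,m}=0$ whenever $i,j\neq k$ and $m\in supp(\bbf_{\pi(k)})$; letting $k$ run over $\bn\setminus\{i,j\}$ while keeping $i,j$ fixed yields precisely $P_{ij,m}=0$ for all $m$ outside $supp(\bbf_{\pi(i)})\cup supp(\bbf_{\pi(j)})$. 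For the reverse implication (that (a) forces the stated form of $V(\xb)_m$), I would simply run the computation from the proof of Theorem~\ref{thm_des_OP_su_neq_N} backwards: with $P_{ij,m}=0$ for $i,j\neq k$ and $P_{kk,m}=f_{\pi(k),m}$, expanding $V(\xb)_m=\sum_{i,j}P_{ij,m}x_ix_j$ and substituting $x_k=1-\sum_{i\neq k}x_i$ collapses it to $x_k\big(f_{\pi(k),m}+\sum_i a^{(m)}_{ik}x_i\big)$.

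Next I would handle part (b), for $c\in\cc_{\cf_\pi}$. Here one first notes from $P_{kk,m}=f_{\pi(k),m}$ and $c\notin\bigcup_k supp(\bbf_{\pi(k)})$ that $P_{kk,c}=0$ for all $k$, so $V(\xb)_c=2\sum_{j<i}P_{ij,c}x_ix_j$; this reduces the analysis to the off-diagonal coefficients. Then each of the cases (I), (II)(i), (II)(ii)(1), (II)(ii)(2) in the two statements corresponds to a case in Theorem~\ref{thm_des_OP_su_neq_N} under the dictionary: ``the only surviving $P_{ij,c}$'s are such-and-such'' $\Longleftrightarrow$ ``$V(\xb)_c$ has such-and-such a bilinear form,'' which is immediate because a bilinear form $\sum_{j<i}P_{ij,c}x_ix_j$ determines and is determined by its coefficients. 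The vanishing statements in cases (i), (ii) of the Corollary are exactly the conclusions $P_{ij,c}=0$ derived in the proof of the Theorem from the test-vector orthogonalities $\xb^{(i_c,j_c)}\perp\yb^{[i_c,j_c]}$, $\xb^{(i_{c_0},i_c)}\perp\yb^{[i_{c_0},i_c]}$, $\xb^{(i_{c_0},j_c)}\perp\yb^{[i_{c_0},j_c]}$, so nothing new has to be proved; I would just cite those derivations.

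Since essentially all of the work has already been done in Theorem~\ref{thm_des_OP_su_neq_N} and its $\pi$-version, the proof reduces to bookkeeping, and the only genuine care needed is in matching the case labels and making sure the case split in (b) is exhaustive and mutually exclusive in both formulations (in particular that ``there is no $P_{ij,c}>0$ for $j\in\{i_c,j_c\}$'' is handled in case (i), that the choice $j=i_c$ versus $j=j_c$ is indeed without loss of generality, and that Case~1/Case~2 in the Theorem's proof correspond to (1)/(2) here). The mild obstacle, if any, is purely expository: ensuring the translation ``coefficient conditions $\Leftrightarrow$ operator form'' is stated crisply enough that the equivalence of the two characterizations of OP QSO is transparent. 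I would therefore write the proof as: ``This is a reformulation of Theorem~\ref{thm_des_OP_su_neq_N}; we record how the conditions on $\{P_{ij,k}\}$ correspond to the stated forms of $V(\xb)$,'' and then spell out the (a)- and (b)-dictionaries as above.
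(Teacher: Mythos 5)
Your proposal is correct and matches the paper exactly: the paper offers no separate argument, introducing this corollary with ``An immediate consequence of the theorem is the following result,'' and the intended justification is precisely the clause-by-clause translation you describe between the coefficient conditions and the operator forms established in Theorem~\ref{thm_des_OP_su_neq_N} and its $\pi$-twisted version. The only caveat, inherited from the statement itself rather than introduced by you, is that the vanishing claim in (a) should be read as applying to $k\in supp(\cf_\pi)$ only, since for $k\in\cc_{\cf_\pi}$ the coefficients are governed by part (b).
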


\begin{cor}
Let $\cf=\{\bbf_{k}\}$ be an orthogonal system and $ T $ is a LSO on
$S$ such that $ T(\eb_{k}) = \bbf_{\pi(k)} $ for any $ k\in \bn $
and an injective mapping $ \pi : \bn \rightarrow \bn $, then $ T $
is an OP linear stochastic operator if and only if $ T $ takes the
following form:
\begin{itemize}
\item[(i)] For any $ m \in supp(\bbf_{\pi(k)}) $
\[ T(\xb)_{m} = f_{k,m}x_{k} \]
\item[(ii)] For any $ c \in \cc_{\cf_\pi} $
\[ T(\xb)_{c} = 0 \]
\end{itemize}
\end{cor}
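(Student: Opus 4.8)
The plan is to derive this corollary about linear stochastic operators as a direct specialization of Corollary \ref{cor_OP_hereCo}. Since a LSO $T$ is a QSO whose heredity coefficients have the special form $P_{ij,k}^{(T)}=\tfrac{t_{ik}+t_{jk}}{2}$ (see \eqref{VT}), the strategy is: first observe that $T(\eb_k)=\bbf_{\pi(k)}$ forces $\tb_k=(t_{ki})_i=\bbf_{\pi(k)}$, so $t_{k,m}=f_{\pi(k),m}$ for $m\in supp(\bbf_{\pi(k)})$ and $t_{k,m}=0$ otherwise; then plug this into the defining formula \eqref{VT} and run it through the conditions of Corollary \ref{cor_OP_hereCo}.

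First I would establish the key reduction: for an LSO, the vectors $\bp_{ij}$ simplify dramatically. Indeed $P_{ij,c}^{(T)}=\tfrac{t_{ic}+t_{jc}}{2}$, so $P_{ij,c}^{(T)}=0$ precisely when $t_{ic}=t_{jc}=0$, i.e. when $c\notin supp(\tb_i)\cup supp(\tb_j)=supp(\bbf_{\pi(i)})\cup supp(\bbf_{\pi(j)})$. In particular, taking $i=j$, $P_{ii,c}^{(T)}=t_{ic}$, which matches part (a) of Corollary \ref{cor_OP_hereCo} automatically. The substantive point is part (b): for $c\in\cc_{\cf_\pi}$ we have $c\notin\bigcup_k supp(\bbf_{\pi(k)})$, hence $t_{kc}=0$ for every $k\in\bn$, and therefore $P_{ij,c}^{(T)}=\tfrac{t_{ic}+t_{jc}}{2}=0$ for all $i,j\in\bn$. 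Thus alternative (I) of Corollary \ref{cor_OP_hereCo}(b) is the only one that can occur, which is exactly condition (ii) of the present statement, $T(\xb)_c=0$.

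For condition (i), I would compute $T(\xb)_m$ directly for $m\in supp(\bbf_{\pi(k)})$ using \eqref{TT}: $T(\xb)_m=\sum_{i\in\bn}t_{im}x_i$. The claim is this reduces to $f_{k,m}x_k$, which requires $t_{im}=0$ for all $i\neq k$ whenever $m\in supp(\bbf_{\pi(k)})$. This follows from orthogonality of the system $\{\bbf_{\pi(i)}\}$ together with the relation $t_{im}=f_{\pi(i),m}$: if $i\neq k$ then $supp(\bbf_{\pi(i)})\cap supp(\bbf_{\pi(k)})=\emptyset$ (using injectivity of $\pi$), so $m\in supp(\bbf_{\pi(k)})$ forces $m\notin supp(\bbf_{\pi(i)})$, whence $t_{im}=f_{\pi(i),m}=0$. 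Therefore only the $i=k$ term survives and $T(\xb)_m=t_{km}x_k=f_{\pi(k),m}x_k$ (note: one should write $f_{\pi(k),m}$ here, matching the notation elsewhere — there may be a minor typo of $f_{k,m}$ vs $f_{\pi(k),m}$ in the statement). Conversely, if $T$ has the stated form, then the heredity coefficients $P_{ij,k}^{(T)}=\tfrac{t_{ik}+t_{jk}}{2}$ manifestly satisfy all the (now mostly vacuous) conditions in Corollary \ref{cor_OP_hereCo}, so $T$ is OP; alternatively one checks orthogonality of $\{T(\eb_k)\}$ directly via Remark \ref{OPT}.

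The main obstacle — though it is more bookkeeping than genuine difficulty — is confirming that the elaborate case analysis in Corollary \ref{cor_OP_hereCo}(b), parts (II), (i), (ii), (1), (2), all degenerate to the trivial case (I) under the LSO hypothesis; this is immediate once one notes $t_{kc}=0$ for all $k$ when $c\in\cc_{\cf_\pi}$, so there is no nonzero $P_{i_cj_c,c}^{(T)}$ to begin with. The only point requiring a little care is making sure the injectivity of $\pi$ is used correctly to guarantee disjointness of the supports $supp(\bbf_{\pi(i)})$ as $i$ ranges over $\bn$, which is what collapses the sum defining $T(\xb)_m$ to a single term.
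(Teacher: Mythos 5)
Your proposal is correct and follows essentially the route the paper intends: the corollary is the specialization of the QSO description (Corollary on heredity coefficients) to the linear case via $P_{ij,k}^{(T)}=\tfrac{t_{ik}+t_{jk}}{2}$, with the identification $\tb_k=T(\eb_k)=\bbf_{\pi(k)}$ and the orthogonality/injectivity argument collapsing $\sum_i t_{im}x_i$ to the single term $i=k$. Your side remark is also right that the displayed coefficient should read $f_{\pi(k),m}$ rather than $f_{k,m}$ to be consistent with the preceding corollary's notation.
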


%The next corrolary is a generalization of Theorem \ref{thm_far_has_infi}.

\begin{rem} \label{cor_OP_N} Let $\cf=\{\bbf_{k}\}$ be an
orthogonal system. One of the important class of infinite
dimensional OP QSO is when the union of the supports of
$\{\bbf_{k}\} $ cover $ \bn $.
    So, let $ V $ be a QSO such that $ V(e_{k}) = \bbf_{\pi(k)} $ for some injective mapping $\pi:\bn\to\bn$, and $\mathcal{C}_{\cf_\pi}=\emptyset$.
    Then $ V $ is OP if and only if one has
    \begin{itemize}
    \item[(i)] $ V $ has the form given by
         \[ V(\xb)_{m} = x_{k} \left( f_{\pi(k),m} + \sum\limits_{i=1}^{\infty} a_{ik}^{(m)}x_{i} \right ) \]
         for any $ m \in supp(\bbf_{\pi(k)})$.
    \item[(ii)] The  heredity coefficients  $ P_{ij,k} $ satisfy
    \[ P_{ii,k} = f_{i,k}  \ \ \forall \  k \in supp(\bbf_{\pi(i)}) \textmd{ and } P_{ij,k}=0 \ \ \forall k \notin \{ supp(\bbf_{\pi(i)}) \cup supp(\bbf_{\pi(j)}) \} \]
    \end{itemize}
    \end{rem}

Now, it is natural to consider an orthogonal system $
\cf=\{\bbf_{k}\} $ of $ S $ such that the support of each (or some)
$ \bbf_{k} $ is countable. Let us provide an example of such kind of
orthogonal system. Take $A_j=\{j2^n\ :\ n\geq 0\}$, $j\in 2\bn-1$.
It is clear that $\{A_j\}$ is a cover for $\bn$.  Now, for each
$j\in2\bn-1$  we define $\bbf_{j}=(f^{(j)}_m)_{m=1}^\infty$ as
follows: for each $j\in 2\bn-1$ define
\begin{equation*}
f^{(j)}_m= \left\{
\begin{array}{ll}
\frac{j-1}{j}\big(\frac1j\big)^n, \ \ m=j2^n, \ n\geq 0,\\[3mm]
0, \qquad \ \qquad m\notin A_j.
\end{array}
\right.
\end{equation*}
One can see that the system $\{\bbf_{j}\}_{i\in 2\bn-1}$ is
orthogonal and $supp(\bbf_{j})=A_j$, $j\in 2\bn-1$.

Let us consider some examples of OP QSO defined on $ S $.

\begin{exm}
Now we are going to produce an example of quadratic shift operator.
Assume that a QSO $V$ such that $ V(\eb_{i}) = \eb_{i+1} $ for every
$ i\in \bn $. From Corollary \ref{cor_OP_hereCo} one gets $
P_{ii,i+1}=1 $ for any $ i \in \bn $. Choose
%$ P_{ij,1} = 0 $ for any $ i,j \in \bn $ and
$ P_{i1,2}=0 $ for any $ i \geq 2 $. Next, we take for any $ k \geq
2 $
\[ %P_{k+1k,k+1} = 1, \ \
P_{ik,k+1}=1 \textmd{ for } i\in \{ 1,2,\dots,k-1 \} \textmd{ and }
P_{ik,k+1}=0  \textmd{ for }  i \geq k+1  \] From the selected
heredity coefficients, we have $ P_{ij,1}=0 $ for any $ i,j \in \bn
$ and it is clear that they satisfy \eqref{eqn_coef_cond} hence $ V
$ is well-defined.
%Keeping in mind, $ \sum_{k=1}^{\infty} P_{ij,k}=1 $, then
Thus, using Theorem \eqref{thm_des_OP_su_neq_N} one gets
\begin{eqnarray*}
(V(\xb))_k &=& \left\{
\begin{array}{l}
0, \qquad k=1, \\
x_{1}^{2}, \qquad k=2, \\
x_{k}\left( 1+ \sum\limits_{i=1, i\neq k}^{\infty}
(2P_{ik,k+1}-1)x_{i} \ \   \right), \ \ k \geq 3
\end{array}\right. \nonumber \\
\label{eqn_shift} &=& \left\{
\begin{array}{l}
0, \qquad k=1, \\
x_{1}^{2}, \qquad k=2, \\
x_{k}\left( \sum\limits_{i=1}^{k-1} 2 x_{i}  + x_{k} \ \ \forall
\right) \ \ k \geq 3
\end{array}\right. \nonumber
\end{eqnarray*}
Note that $V$ is a concrete example of nonlinear shift operator.
\end{exm}

\section{Properties of OP QSO}

In this section we are going to investigate some properties of
infinite dimensional OP QSO.

In what follows, we consider proper subsets of $\bn$, i.e.
$\a\subset \bn$ with $\a\neq\bn$. For a given $ \a \subset \bn $, we
denote
\[ \G_{\a} = \{ \xb \in S :\  x_{i}=0, \ \forall i \notin \a \}, \ \ \
 ri\G_{\a} = \{ \xb \in \G_{\a} :\  x_{i}>0, \ \forall i \in\a \}
 \]
 By $Fix(V) $ we denote the set of all fixed points of $ V $, i.e.
$ Fix(V)=\{\xb\in S: \ V(\xb)=\xb\}. $ Let $ \cf=\{\bbf_{k}\}$ be an
orthogonal system of $S$.
%i.e.,
%\[ \LL_{\bbf_{k}} = \left\{ \{ \bbf_{k} \}_{k=1}^{\infty} ; \bbf_{1} \perp \cdots \perp \bbf_{n} \perp \cdots \right\}. \]
By $ \cv_{\cf}$ we denote the set of all OP QSO which are generated
by the orthogonal system $ \cf$, i.e.  $ V \in \cv_{\cf} $ means $
V(\eb_{k}) = \bbf_{k} $ for any $ k \in \bn $.

Denote
\[ supp(\cf) = \bigcup\limits_{k=1}^{\infty}supp(\bbf_{k}) \]

\begin{lem} \label{lem_map_face} Let $\cf=\{\bbf_{k}\}$ be an orthogonal
system such that $supp(\cf)=\bn $ and $ V \in \cv_{\cf} $. Then for
any $ \a \subset \bn $ one has
\begin{itemize}
\item[(i)]  $V(\G_{\a}) \subset \G_{\a^{'}}$;

\item[(ii)] $V(ri\G_{\a}) \subset ri \G_{\a^{'}}$,
\end{itemize}
where $$ \a^{'} = \bigcup\limits_{\ell \in \a } supp(\bbf_{\ell}).
$$
%\begin{itemize}
%\item[(i)] $ V(\G_{\a}) \subset \G_{\a^{'}} \textmd{ where } \a^{'} = \{ \bigcup\limits_{i \in \a } supp(\bbf_{i}) \} $
%\item[(i)] $   $
%\item[(ii)] $ Fix(V) \notin \G_{\a} \ \ \forall \a \subset \bn $
%\end{itemize}
\end{lem}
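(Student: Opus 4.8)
The plan is to use the explicit formula for OP QSOs furnished by Theorem~\ref{thm_des_OP_su_neq_N} (with $\pi=\mathrm{id}$ and $\cc_\cf=\emptyset$, since $supp(\cf)=\bn$), namely that for every $m\in supp(\bbf_k)$ one has $(V(\xb))_m=x_k\big(f_{k,m}+\sum_{i=1}^\infty a_{ik}^{(m)}x_i\big)$. First I would fix $\a\subset\bn$ and take $\xb\in\G_\a$, so $x_i=0$ for all $i\notin\a$. To prove (i) I must show $(V(\xb))_m=0$ whenever $m\notin\a'=\bigcup_{\ell\in\a}supp(\bbf_\ell)$. Since $supp(\cf)=\bn$, any such $m$ lies in $supp(\bbf_k)$ for exactly one $k$ (the supports are pairwise disjoint as $\cf$ is orthogonal), and $m\notin\a'$ forces $k\notin\a$. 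Then the formula gives $(V(\xb))_m=x_k(\cdots)=0$ because $x_k=0$. This establishes $V(\G_\a)\subset\G_{\a'}$.

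For (ii) I would take $\xb\in ri\,\G_\a$, i.e.\ $x_i>0$ for all $i\in\a$ and $x_i=0$ otherwise, and show $(V(\xb))_m>0$ for every $m\in\a'$. Given $m\in\a'$, pick $\ell\in\a$ with $m\in supp(\bbf_\ell)$; again $\ell$ is the unique index $k$ with $m\in supp(\bbf_k)$. The formula yields
\[
(V(\xb))_m=x_\ell\Big(f_{\ell,m}+\sum_{i=1}^\infty a_{i\ell}^{(m)}x_i\Big),
\]
and $x_\ell>0$ since $\ell\in\a$. It remains to check the bracketed factor is strictly positive. Here I would rewrite it in the "honest QSO" form: going back through the derivation in the proof of Theorem~\ref{thm_des_OP_su_neq_N}, that factor equals $f_{\ell,m}+\sum_{i\neq\ell}(2P_{i\ell,m}-f_{\ell,m})x_i$, which using $\sum_{i\neq\ell}x_i=1-x_\ell$ can be organized so that $(V(\xb))_m=\sum_{i,j}P_{ij,m}x_ix_j$ restricted to the relevant indices; since all $P_{ij,m}\ge 0$ and in particular $P_{\ell\ell,m}=f_{\ell,m}>0$ (because $m\in supp(\bbf_\ell)$), while $x_\ell>0$, we get $(V(\xb))_m\ge P_{\ell\ell,m}x_\ell^2>0$. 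Combined with (i), which already gives $(V(\xb))_m=0$ for $m\notin\a'$, this shows $V(\xb)\in ri\,\G_{\a'}$.

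The one genuine subtlety — and the step I would be most careful about — is the strict positivity in (ii): one must not argue via the rewritten formula with its signed coefficients $a_{i\ell}^{(m)}$, since those can be negative and the sum is infinite, so no term-by-term sign argument works directly. The clean route is to not expand at all but simply bound $(V(\xb))_m=\sum_{i,j\in\bn}P_{ij,m}x_ix_j\ge P_{\ell\ell,m}x_\ell^2$ from the original definition of the QSO (all summands nonnegative), using $P_{\ell\ell,m}=f_{\ell,m}>0$ from \eqref{eqn_P_kk_neq_N} and $x_\ell>0$. Everything else is bookkeeping with the disjointness of the supports $supp(\bbf_k)$, which holds because $\cf$ is an orthogonal system. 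I would also note explicitly at the start that $\a'$ is well-defined and that $\a\subsetneq\bn$ is allowed, so that $\G_\a$ and $\G_{\a'}$ are the faces in the statement.
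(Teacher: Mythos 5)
Your proposal is correct and follows essentially the same route as the paper: part (i) via the representation $V(\xb)_m = x_\ell(f_{\ell,m}+\sum_i a_{i\ell}^{(m)}x_i)$ for $m\in supp(\bbf_\ell)$ together with $x_\ell=0$ for $\ell\notin\a$, and part (ii) via the lower bound $V(\xb)_m\ge f_{\ell,m}x_\ell^2>0$. The subtlety you flag about the signed coefficients $a_{i\ell}^{(m)}$ is exactly what the paper handles by reorganizing the bracket back into the nonnegative form $\sum_{i\in\a}2P_{i\ell,m}x_i+f_{\ell,m}x_\ell$, so your direct bound from the original QSO sum is the same estimate.
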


\begin{proof} (i) Let $ V \in \cv_{\cf} $, then due to Remark \ref{cor_OP_N} $ V $ takes the following form
\begin{equation}\label{VV-1} V(\xb)_{k} =  x_{\ell} \left( f_{\ell,k} +
\sum\limits_{i=1}^{\infty} a_{i\ell}^{(k)}x_{i} \right)
\end{equation}
for any $ k \in supp(\bbf_{\ell}) $,
 $ \ell \in \bn $.

 Now let $ \xb = (x_{1},x_{2},\dots) \in \G_{\a} $, then  $ x_{\ell} = 0 $ for any $\ell \notin \a
 $, hence from \eqref{VV-1} one finds
\begin{eqnarray} \label{eqn_V=0}
V(\xb)_{k} = 0 , \ \ \textrm{for all} \ \ k \in supp(\bbf_{\ell}), \
\ \ell\notin\a,
\end{eqnarray}
this is the assertion (i).

Now take  $ \xb = (x_{1},x_{2},\dots) \in ri\G_{\a} $, then
$x_\ell>0$ for all $ \ell \in \a $. From \eqref{VV-1} one gets
\begin{eqnarray}\label{eqn_V_more_or_eq_0}
V(\xb)_{k}
&=&  x_{\ell} \left( f_{\ell,k} + \sum\limits_{i=1}^{\infty} a_{i\ell}^{(k)}x_{i} \right) \nonumber \\
&=& x_{\ell} \left( f_{\pi\ell,k} + \sum\limits_{\stackrel{i=1}{i \neq \ell}}^{\infty} \left(2 P_{i\ell,k} - f_{\ell,k} \right) x_{i} \right) \nonumber \\
&=& x_{\ell} \left( f_{\ell,k} + \sum\limits_{\stackrel{i=1}{i \neq \ell}}^{\infty}2 P_{i\ell,k} x_{i} - f_{\ell,k} \sum\limits_{\stackrel{i=1}{i \neq \ell}}^{\infty}  x_{i} \right) \nonumber \\
&=& x_{\ell} \left( f_{\ell,k} + \sum\limits_{\stackrel{i=1}{i \neq
\ell}}^{\infty}2 P_{i\ell,k} x_{i} - f_{\ell,k} (1-x_{\ell})
\right)  \nonumber \\
&=& x_{\ell} \left( \sum\limits_{ i \in \a}2 P_{i\ell,k} x_{i} +
f_{\ell,k} x_{\ell} \right)\nonumber\\[2mm]
& \geq& f_{\ell,k} x_{\ell}^{2} > 0
\end{eqnarray}
for $ k \in supp\left( \bbf_{\ell} \right) $. This means
$V(ri\G_{\a}) \subset ri \G_{\a^{'}}$. Moreover, using
\eqref{eqn_V_more_or_eq_0} we have $$ supp(V(\xb)) =
\bigcup\limits_{\ell \in \a } supp(\bbf_{\ell}). $$ This completes
the proof.
\end{proof}

Now it is natural to consider the case $ supp(\cf) \subset \bn $.
According to Theorem \ref{thm_des_OP_su_neq_N}, for any $ c\in
\cc_\cf $ (here as before, $\cc=\bn\setminus supp(\cf)$), $
V(\xb)_{c} $ takes one of the following form
\begin{eqnarray}\label{eqn_form_c}
\left\{
\begin{array}{llll}
(i) &   V(\xb)_{c}&=& 0 \\
(ii) & V(\xb)_{c} & =& 2 P_{i_{c}j_{c},c}x_{i_{c}}x_{j_{c}} \\
(iii) & V(\xb)_{c}& =& 2\left( P_{i_{c}j_{c},c}x_{i_{c}}x_{j_{c}} + P_{i_{c_{0}}j_{c},c}x_{i_{c_{0}}}x_{j_{c}} + P_{i_{c_{0}}i_{c},c}x_{i_{c_{0}}}x_{i_{c}} \right) \\
(iv) & V(\xb)_{c} & = & 2 x_{i_{c}} \left( P_{i_{c}j_{c},c}x_{j_{c}}+ \sum\limits_{\stackrel{i=1}{i \neq i_{c},j_{c}}}^{\infty}P_{ii_{c},c}x_{i} \right)
\end{array}
\right.
\end{eqnarray}
From now on, let us keep the notation that we have used in Theorem
\ref{thm_des_OP_su_neq_N} (i.e., $ i_{c},j_{c}, i_{c_{0}} $). To get
an analogous result like in Lemma \ref{lem_map_face}, it is enough
for us to study the coordinates belonging to $ \cc_\cf$ while  $
V(\xb)_{c} $ takes one of the forms given by (ii), (iii) and (iv),
since the case $ m \in supp(\bbf_{k}) $ is already described by
Lemma \ref{lem_map_face}.

Let us take $ \a \subset \bn $. Now we consider the mentioned cases
one by one.

\textbf{CASE (ii)}. In this case, we have the following
possibilities:
\[ (I) \ i_{c}, j_{c} \in \a; \ \ (II) \ i_{c} \in \a, j_{c} \notin \a; \ \ (III) \ j_{c} \in \a, i_{c} \notin \a; \ \ (IV) \ i_{c}, j_{c} \notin \a. \]

\textbf{CASE (iii)}. In this case, we have the following ones:
\[ (I) \ i_{c}, j_{c}, i_{c_{0}} \in \a; \ \
(II) \ i_{c} \in \a \ j_{c}, i_{c_{0}} \notin \a; \ \
(III) \ i_{c},j_{c} \in \a \ i_{c_{0}} \notin \a; \ \
(IV) \ i_{c}, i_{c_{0}} \in \a \  j_{c} \notin \a. \]
\[
(V) \ j_{c} \in \a \ i_{c}, i_{c_{0}} \notin \a; \ \
(VI) \ j_{c}, i_{c_{0}} \in \a \  i_{c} \notin \a; \ \
(VII) \ i_{c}, j_{c}, i_{c_{0}} \notin \a; \ \
(VIII) \ i_{c_{0}} \in \a \ j_{c},i_{c} \notin \a; \ \
\]

\textbf{CASE (iv)}. This case is the same like CASE (ii).

\begin{rem}\label{rem_map_supp_subset_bn}
    Let $ V \in \cv_{\cf} $ such that $ supp(\cf) \subset \bn $. For any $ \a \subset \bn $ we have the following statements:
    \begin{itemize}
        \item[(a)] Let $ c \in \cc_\cf $, then $ V(\xb)_{c} $ takes the form as given by (ii). If (I) is satisfied then $ V(\xb)_{c} > 0 $ and
        in the other cases $ V(\xb)_{c} = 0 $.
        \item[(b)] Let $ c \in \cc_\cf $ then  $ V(\xb)_{c} $ takes the form as given by (iii). If (I), (III), (IV) and (VI) are satisfied
        then $ V(\xb)_{c} > 0 $ and in the other cases $ V(\xb)_{c} = 0 $.
        \item[(c)] Let $ c \in \cc_\cf$ then  $ V(\xb)_{c} $ takes the form as given by (iv). If
        \begin{itemize}
            \item[-] (I) is satisfied then $ V(\xb)_{c} > 0 $
            \item[-] (II) is satisfied and there exist $ i_{0} \in \a $ such that $ P_{i_{0}i_{c},c}>0 $ (if not, then $ V(\xb)_{c}=0 $), then $ V(\xb)_{c}>0 $
        \end{itemize}
           In the other cases $ V(\xb)_{c} = 0 $.
    \end{itemize}
\end{rem}

Let $ V$ be a OP QSO generated by an orthogonal system $
\cf=\{\bbf_{k}\} $, i.e. $V(\eb_k)=\bbf_{k}$, $k\in\bn$. Now want to
distinguish a set where some of elements of the system $\cf$
coincides with certain elements of the standard basis. Namely, let
us denote

\[ \b = \{ k \in \bn :\  \bbf_{k}=\eb_{i}  \textmd{ for some } i \in \bn \} \]

\begin{thm}\label{thm_no_fix_p_on_boun}
Let $ V \in  \cv_{\cf} $. If $ \b= \emptyset $, then for any $ \a
\subset \bn $, one has
    \[ Fix(V) \notin \G_{\a}. \]
 Moreover, if the fixed point exists, then $ Fix(V) \in ri S $.
\end{thm}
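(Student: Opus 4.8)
The plan is to argue by contradiction and to locate any boundary fixed point precisely via the face–mapping lemma. Suppose $\xb\in Fix(V)$ with $\xb\in\G_{\alpha}$ for some proper $\alpha\subsetneq\bn$; replacing $\alpha$ by $supp(\xb)$ we may assume $\alpha=supp(\xb)\neq\bn$, so that $\xb\in ri\,\G_{\alpha}$. The first step is to identify $supp(V(\xb))$. When $supp(\cf)=\bn$ this is furnished by the proof of Lemma \ref{lem_map_face}, which gives $V(\xb)\in ri\,\G_{\alpha'}$ with the sharp identity $supp(V(\xb))=\alpha':=\bigcup_{\ell\in\alpha}supp(\bbf_{\ell})$; in general the same conclusion is read off the coordinate description of OP QSOs from the previous section — for $m\in supp(\cf)$ the $m$-th coordinate of $V(\xb)$ carries the factor $x_{k}$ and is strictly positive exactly when $x_{k}>0$, while the coordinates in $\cc_{\cf}$ are controlled by part (b) of that description. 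Since $V(\xb)=\xb$ this forces
\[
\alpha=supp(\xb)=supp(V(\xb))=\bigcup_{\ell\in\alpha}supp(\bbf_{\ell}),
\]
and, $\cf$ being orthogonal, the union on the right is disjoint.

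Next I would bring in the hypothesis $\beta=\emptyset$. Since $\bbf_{\ell}\in S$, the relation $|supp(\bbf_{\ell})|=1$ is equivalent to $\bbf_{\ell}=\eb_{i}$ for some $i$, so $\beta=\emptyset$ says exactly that $|supp(\bbf_{\ell})|\geq2$ for every $\ell$. If $\alpha$ is finite this already closes the argument: the displayed identity together with disjointness gives $|\alpha|=\sum_{\ell\in\alpha}|supp(\bbf_{\ell})|\geq2|\alpha|$, hence $\alpha=\emptyset$, contradicting $\xb\in S$.

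The step I expect to be the main obstacle is an infinite proper $\alpha$: then the set identity $\alpha=\bigcup_{\ell\in\alpha}supp(\bbf_{\ell})$ with blocks of size $\geq2$ no longer forces $\alpha=\bn$, and the contradiction must come from the values of $\xb$, not merely from its support. Here my plan is to use the explicit form of $V$ on $\G_{\alpha}$ (Remark \ref{cor_OP_N}): for $m\in supp(\bbf_{k})$, $k\in\alpha$, the fixed-point relation reads $x_{m}=x_{k}\big(f_{k,m}x_{k}+2\sum_{i\in\alpha\setminus\{k\}}P_{ik,m}x_{i}\big)$. Summing over the (at least two) coordinates $m$ of a single block, and using $\sum_{m\in supp(\bbf_{k})}f_{k,m}=1$ together with $s^{(k)}_{i}:=\sum_{m\in supp(\bbf_{k})}P_{ik,m}\in[0,1]$ and $s^{(k)}_{i}+s^{(i)}_{k}=1$ (the latter from $\sum_{m}P_{ik,m}=1$ and, by Corollary \ref{cor_OP_hereCo}, $P_{ik,m}=0$ unless $m\in supp(\bbf_{i})\cup supp(\bbf_{k})$), the block masses $y_{k}:=\sum_{m\in supp(\bbf_{k})}x_{m}$ satisfy
\[
y_{k}=x_{k}\Big(2-x_{k}-2\sum_{i\in\alpha\setminus\{k\}}s^{(i)}_{k}x_{i}\Big)\leq x_{k}(2-x_{k})<2x_{k}.
\]
I would then feed these inequalities into the bookkeeping $\sum_{k\in\alpha}y_{k}=\sum_{k\in\alpha}x_{k}=1$, combined with the individual coordinate equations for the block $supp(\bbf_{k^{*}})$ containing a coordinate at which $\max_{k\in\alpha}x_{k}$ is attained (such a coordinate exists because $\sum_{k\in\alpha}x_{k}=1$) and with the strict inequality $f_{k,m}<1$, itself another consequence of $\beta=\emptyset$, in order to derive a contradiction forcing $\alpha=\bn$; pinning down the precise combination of these that closes the case is the delicate point.

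Finally the ``moreover'' is immediate once the first assertion holds: a fixed point not lying in $ri\,S$ would have support a proper subset of $\bn$, hence would lie in the proper face $\G_{supp(\xb)}$, contradicting what was just shown; therefore every fixed point of $V$ belongs to $ri\,S$. The bulk of the work, and where I would concentrate the effort, is the quantitative estimate in the infinite-support case.
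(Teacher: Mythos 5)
Your reduction to $\a=supp(\xb)$, the finite-$\a$ counting argument, and the ``moreover'' step are fine and coincide with what the paper does. But the proposal has a genuine gap exactly where you flag one: the case of an infinite proper $\a$. Worse, the specific estimate you propose cannot close it. Using $s^{(k)}_i+s^{(i)}_k=1$, the block-mass relations you derive sum, over $k\in\a$, to
\[
\sum_{k\in\a}y_k\;=\;2\sum_{k\in\a}x_k-\sum_{k\in\a}x_k^2-2\sum_{k\in\a}\sum_{i\in\a\setminus\{k\}}s^{(i)}_k x_ix_k\;=\;2-\sum_{k}x_k^2-\Bigl(1-\sum_{k}x_k^2\Bigr)\;=\;1,
\]
i.e. the bookkeeping $\sum_k y_k=\sum_k x_k=1$ is an identity valid for \emph{every} $\xb\in ri\,\G_\a$, so no contradiction can be extracted from block masses alone, no matter how the pieces are combined. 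Moreover the set identity $\a=\bigcup_{\ell\in\a}supp(\bbf_\ell)$ with pairwise disjoint blocks of size $\geq 2$ is genuinely attainable for infinite proper $\a$: take $\a=2\bn$ with $supp(\bbf_{2k})=\{4k-2,4k\}$ and $supp(\bbf_{2k-1})=\{4k-3,4k-1\}$. So the infinite case is left open. (For what it is worth, the paper's own proof disposes of this case by asserting $|supp(V(\xb_0))|>|\a|$, a cardinality inequality that likewise fails when $\a$ is infinite; you have correctly located the weak point of the argument, but you have not repaired it, and the dead end you run into suggests a fundamentally different idea is needed there.)

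A secondary issue: in the case $supp(\cf)\subset\bn$ you assert $supp(V(\xb))=\bigcup_{\ell\in\a}supp(\bbf_\ell)$ in general, but coordinates $c\in\cc_\cf$ of $V(\xb)$ can be strictly positive (form (b)(II) in Theorem \ref{thm_des_OP_su_neq_N}), so this identity can fail. In particular the sub-case $\a\subseteq\cc_\cf$ requires a separate argument --- the paper handles it with the strict mass inequality \eqref{eqn_less-2}, namely $\sum_{k\in\a}V(\xb_0)_k\leq\sum_{i\neq j,\ i,j\in\a}x_ix_j<1$ --- and your proposal does not address it.
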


\begin{proof}
Assume that for a fixed point $\xb_0\in S$ one has $\xb_0\in
\Gamma_\alpha$ for some $ \a \subset \bn $. This means
\begin{eqnarray} %\label{eqn_asu_fix}
\begin{array}{lll}
V(\xb_{0})_{k} = 0 & if & k \notin \a, \\
V(\xb_{0})_{k} > 0 & if & k \in \a.
\end{array} \nonumber
\end{eqnarray}
%We will show (i) and (ii) one by one.

Now we consider two separate cases: ($supp(\cf) = \bn $)
and ($supp(\cf) \subset \bn $).\\

{\sc Case 1}. Let us suppose ($supp(\cf) = \bn $). Since $ \xb_{0} $
is a fixed point, then one has
\begin{eqnarray}\label{eqn_(i)_C(1)}
supp(V(\xb_{0})) = \a
\end{eqnarray}

On the other hands, due to the assumption $ \b = \emptyset $ and
from Lemma \ref{lem_map_face}, we get
\[ |supp( \bbf_{k})| \geq 2 \textmd{ for } k\in \a \]
and
\[ supp(V(\xb_{0})) = \bigcup\limits_{\ell \in \a }supp\left( \bbf_{\ell} \right) \]
Therefore
\begin{eqnarray}\label{eqn_contra_supp}
|supp\left( V(\xb_{0}) \right)| > |\a|
\end{eqnarray}
which contradicts to \eqref{eqn_(i)_C(1)}. Therefore, the fixed
point cannot be in the face $ \G_{\a} $ for any $ \a \subset \bn $.

\textbf{Part 2 ($supp(\cf) \subset \bn $)}. Take any $ \a \subset
\bn $. Now we are going to consider the following three possible
cases: $ \cc_\cf \cap \a = \emptyset $, $ \cc_\cf \cap \a \neq
\emptyset, \a \not\subset \cc_\cf $, and $ \a \subseteq \cc_\cf $.
%Assume there exist fixed point $ \xb_{0} \in  $

%\textbf{(i)}
In the first case, we obtain the desired result by the same argument
as in \textbf{Part 1 }.

Now we consider the case:  $ \a \backslash \cc_\cf\neq\emptyset$.
Let $ \xb_{0} \in ri\G_{\a} $ which implies \eqref{eqn_(i)_C(1)}. On
the other hand, we have $ |supp(\bbf_{\pi(k)})| \geq 2 $ for all $ k
\in \a \backslash \cc_\cf$, therefore using Lemma \ref{lem_map_face}
one concludes that
\[ |supp(V(\xb_{0}))| = |\{ \a \cap \cc_\cf \} \cup  \bigcup\limits_{k \in \a \backslash \cc_\cf} supp(\bbf_{\pi(k)}) |> |a| \]
which contradicts to \eqref{eqn_(i)_C(1)}.

Let us turn to the last case, i.e. $ \a \subseteq \cc_\cf $. Due to
$\xb_{0}\in ri\G_{\a} $ we get  \eqref{eqn_(i)_C(1)} and
\begin{eqnarray}\label{eqn_no_fix_sum=1}
\sum\limits_{k\in \a} V(\xb_{0})_{k} = 1
\end{eqnarray}

On the other hands, by taking into account that $ \xb_{0} \in
ri\G_{\a} $ and $ P_{ii,c}=0 $ for any $ i \in \bn $, $ c \in
\cc_\cf$, then one finds
\begin{eqnarray}\label{eqn_factor}
\sum\limits_{k\in \a} V(\xb_{0})_{k}  =   \sum\limits_{k\in \a}
\sum\limits_{\stackrel{i,j \in \a}{i\neq j}} P_{ij,k}x_{i}x_{j}
  = \sum\limits_{\stackrel{i,j \in \a}{i\neq j}} x_{i}x_{j} \left( \sum\limits_{k \in \a} P_{i_{c}j_{c},k} \right)
\end{eqnarray}

Since  $ \sum_{k\in \bn} P_{ij,k} = 1 $, we then obtain
\begin{eqnarray}\label{eqn_less}
\sum\limits_{k\in \a} V(\xb_{0})_{k} &\leq&
\sum\limits_{\stackrel{i,j \in \a}{i\neq j}} x_{i}x_{j} =
\sum\limits_{i \in \a}x_{i}\left( \sum\limits_{\stackrel{j \in \a}{j
\neq i}}x_{j} \right)
\end{eqnarray}
%Now we rearrange for $ i_{1} < i_{2} < \cdots < i_{n} <\cdots  $ for all $ i_{k} \in \a $.
%One can factorize $ \sum\limits_{\stackrel{i,j \in \a}{i\neq j}} x_{i}x_{j} $ by the following ways
%\begin{eqnarray}
%\sum\limits_{\stackrel{i,j \in \a}{i\neq j}} x_{i}x_{j}  & = \sum\limits_{i \in \a}x_{i}\left( \sum\limits_{\stackrel{j \in \a}{j \neq i}}x_{j} \right) \nonumber
%&
%\end{eqnarray}
Again $ \xb_{0} \in ri\G_{\a} $ implies
\[ \sum\limits_{\stackrel{j \in \a}{j \neq i}}x_{j} < \sum\limits_{j \in \a}x_{j} =1 \ \ \textmd{ for any } i \in \a \]
Therefore,
\begin{eqnarray}\label{eqn_less-2}
\sum\limits_{k\in \a} V(\xb_{0})_{k} \leq \sum\limits_{\stackrel{i,j
\in \a}{i\neq j}} x_{i}x_{j} < \sum\limits_{i \in \a } x_{i} = 1
\end{eqnarray}
which contradicts to \eqref{eqn_no_fix_sum=1}.

Furthermore, according to the arbitrariness of  $ \a \subset \bn $,
we infer that if a fixed point $ \xb_{0} $ exists, then $ \xb_{0}
\in ri S $. This completes the proof.
\end{proof}

\begin{rem}\label{rem_OS=Sbasis}
    Let $\cf=\{ \bbf_{k}\}$ be an orthogonal system in $S$ and  $ \a \subset \bn $. Then we have
    $$ supp(\{ \bbf_{k} \}_{k \in \a}) =supp(\{ \eb_{k} \}_{k \in \a}) $$ if and only if there is a
    permutation $\pi_\a$ of $\a$ such that  $ \{\bbf_{k} \}_{k \in \a}=\{ \eb_{\pi_\a(k)} \}_{k \in \a} $.
\end{rem}

\begin{thm}\label{thm_b_neq_empty}
    Let $ V $ be an OP QSO generated by $ V(e_{k}) = \bbf_{k} $ for any $ k \in \bn $ and let set $ \b \neq \emptyset $. Assume
    that for any $\a\subset \b$ one has
    \[ \{ \bbf_{k}\}_{k \in\a}\neq\{\eb_{\pi_\a(k)}\}_{k \in \a} \]
    for any permutation $\pi_\a$ of $\a$.
    %\[ supp(\{ e_{k} \}_{k\in A}) \neq supp(\{ \bbf_{k} \}_{k\in A})\]
    Then for any $ \a \subset\bn $ one has
    \[ Fix(V) \notin \G_{\a} \]
    Moreover, if a fixed point exists, then $ Fix(V) \in riS $.
\end{thm}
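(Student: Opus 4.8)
The plan is to mimic the structure of the proof of Theorem~\ref{thm_no_fix_p_on_boun}, arguing by contradiction: suppose a fixed point $\xb_0$ lies in some face $\G_\a$ with $\a\subset\bn$, so that $supp(\xb_0)=\a$, $V(\xb_0)_k=0$ for $k\notin\a$, $V(\xb_0)_k>0$ for $k\in\a$, and hence $supp(V(\xb_0))=\a$. The key new feature compared with Theorem~\ref{thm_no_fix_p_on_boun} is that now $\b\neq\emptyset$, so some of the $\bbf_k$ are singletons; the hypothesis that $\{\bbf_k\}_{k\in\a'}$ is never a permuted standard basis on any $\a'\subset\b$ (equivalently, by Remark~\ref{rem_OS=Sbasis}, that $supp(\{\bbf_k\}_{k\in\a'})\neq supp(\{\eb_k\}_{k\in\a'})$) is exactly what is needed to rule out the ``escape route'' where $\a\subset\b$ and the supports match up.

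First I would split $\a$ as $\a=(\a\cap\b)\cup(\a\setminus\b)$. If $\a\setminus\b\neq\emptyset$, pick any $k_0\in\a\setminus\b$; then $\bbf_{k_0}$ is not a standard basis vector, so if additionally $k_0\notin\cc_\cf$ we have $|supp(\bbf_{k_0})|\geq 2$, and the counting argument of Theorem~\ref{thm_no_fix_p_on_boun} (via Lemma~\ref{lem_map_face}, resp.\ the case analysis of \eqref{eqn_form_c} and Remark~\ref{rem_map_supp_subset_bn} when $supp(\cf)\subset\bn$) gives $|supp(V(\xb_0))|>|\a|$, contradicting $supp(V(\xb_0))=\a$; the subcases where coordinates fall into $\cc_\cf$ are handled exactly as in the three-case breakdown ($\cc_\cf\cap\a=\emptyset$, $\a\setminus\cc_\cf\neq\emptyset$, $\a\subseteq\cc_\cf$) used before, the last one using the strict inequality $\sum_{k\in\a}V(\xb_0)_k<1$ coming from \eqref{eqn_less}--\eqref{eqn_less-2}. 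So the only remaining possibility is $\a\subseteq\b$.

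Now suppose $\a\subseteq\b$. By definition of $\b$, each $\bbf_k$ with $k\in\a$ is a singleton $\eb_{\sigma(k)}$ for some map $\sigma:\a\to\bn$; orthogonality of $\cf$ forces $\sigma$ to be injective, so $\{\bbf_k\}_{k\in\a}=\{\eb_{\sigma(k)}\}_{k\in\a}$ with $\sigma$ a bijection of $\a$ onto $\sigma(\a)$. The hypothesis of the theorem, applied with this $\a\subset\b$, says precisely that $\{\bbf_k\}_{k\in\a}\neq\{\eb_{\pi_\a(k)}\}_{k\in\a}$ for every permutation $\pi_\a$ of $\a$; by Remark~\ref{rem_OS=Sbasis} this is equivalent to $supp(\{\bbf_k\}_{k\in\a})\neq supp(\{\eb_k\}_{k\in\a})$, i.e.\ $\sigma(\a)\neq\a$. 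Hence there is some $k_1\in\a$ with $\sigma(k_1)\notin\a$, i.e.\ $\eb_{\sigma(k_1)}=\bbf_{k_1}=V(\eb_{k_1})$ has its (single) support point outside $\a$. I would then use the explicit form of $V$ from Remark~\ref{cor_OP_N}/Corollary~\ref{cor_OP_hereCo}: for $m=\sigma(k_1)\notin\a$ we have $V(\xb_0)_m = (x_0)_{k_1}\big(f_{k_1,m}+\sum_i a^{(m)}_{i k_1}(x_0)_i\big)$ with $f_{k_1,m}=1$, and a short estimate (as in \eqref{eqn_V_more_or_eq_0}) shows $V(\xb_0)_m\geq (x_0)_{k_1}^2>0$ since $k_1\in\a=supp(\xb_0)$. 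Thus $supp(V(\xb_0))$ contains the point $\sigma(k_1)\notin\a$, contradicting $supp(V(\xb_0))=\a$. (If instead $supp(\cf)\subset\bn$ and $m\in\cc_\cf$, one argues with the relevant case of \eqref{eqn_form_c} in place of part (a) — but since $\bbf_{k_1}=\eb_{\sigma(k_1)}$ forces $\sigma(k_1)\in supp(\cf)$, this subcase does not actually arise.)

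The main obstacle I anticipate is the bookkeeping in the case $\a\subseteq\b$: one must be careful that the injectivity of $\sigma$ coming from orthogonality, together with $\sigma(\a)\neq\a$, genuinely produces an index $k_1\in\a$ mapping outside $\a$ (rather than merely a cardinality mismatch), and one must verify that the coordinate $V(\xb_0)_{\sigma(k_1)}$ is computed by part~(a) of the description rather than by the $\cc_\cf$ branch — which is automatic here because $\sigma(k_1)\in supp(\bbf_{k_1})\subset supp(\cf)$. Once that is pinned down, the contradiction with $supp(V(\xb_0))=\a$ is immediate, and the concluding remark that any fixed point must lie in $riS$ follows from the arbitrariness of $\a$, exactly as at the end of Theorem~\ref{thm_no_fix_p_on_boun}.
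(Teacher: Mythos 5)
Your overall strategy is the same as the paper's: argue by contradiction from $\xb_0\in ri\G_\a$, dispose of the cases with $\a\setminus\b\neq\emptyset$ (and the $\cc_\cf$ subcases when $supp(\cf)\subset\bn$) by the support-counting and summation arguments of Theorem~\ref{thm_no_fix_p_on_boun}, and use Remark~\ref{rem_OS=Sbasis} to bring the hypothesis to bear in the remaining case $\a\subseteq\b$. Those parts are fine and match the paper.

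There is, however, one step that fails as stated, precisely at the point you yourself flag as the anticipated obstacle. In the case $\a\subseteq\b$ you write each $\bbf_k=\eb_{\sigma(k)}$ with $\sigma:\a\to\bn$ injective, deduce $\sigma(\a)\neq\a$ from the hypothesis, and then claim ``hence there is some $k_1\in\a$ with $\sigma(k_1)\notin\a$.'' This implication is false when $\a$ is infinite: an injective map of an infinite set can have image a proper subset of that set without sending any element outside it (take $\a=2\bn$ and $\sigma(2n)=2n+2$). In that subcase your mechanism for the contradiction --- exhibiting a point of $supp(V(\xb_0))$ outside $\a$ --- produces nothing. The contradiction is still available, but through the complementary mechanism: if $\sigma(\a)\subsetneq\a$ then by Lemma~\ref{lem_map_face} one has $supp(V(\xb_0))=\sigma(\a)\subsetneq\a=supp(\xb_0)$, so some coordinate in $\a$ of $V(\xb_0)$ vanishes while the corresponding coordinate of $\xb_0$ is positive, again contradicting $V(\xb_0)=\xb_0$. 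You need both branches ($\sigma(\a)\not\subseteq\a$ and $\sigma(\a)\subsetneq\a$) to cover $\sigma(\a)\neq\a$. The paper sidesteps this entirely by running the argument in the other direction: from $supp(V(\xb_0))=supp(\xb_0)=\a$ it gets $supp(\{\bbf_k\}_{k\in\a})=supp(\{\eb_k\}_{k\in\a})$ and then invokes Remark~\ref{rem_OS=Sbasis} to conclude $\{\bbf_k\}_{k\in\a}=\{\eb_{\pi_\a(k)}\}_{k\in\a}$, contradicting the hypothesis in one stroke. Patch your case $\a\subseteq\b$ accordingly (or adopt the paper's direction) and the proof is complete.
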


\begin{proof} Assume that for a fixed point $\xb_0\in S$ one has $\xb_0\in
\Gamma_\alpha$ for some $ \a \subset \bn $. Without loss of
generality we may assume that $\xb_0\in ri\G_\a$. Now we consider
two possibilities $ supp( \cf )  =\bn $ and $ supp( \cf )\subset\bn
$.

\textbf{Part 1 ($ supp( \cf )  =\bn $)}. There are several
possibilities:
    \begin{itemize}
        \item[(a)] $ \a \cap \b = \emptyset $
        \item[(b)] $ \a \cap \b \neq  \emptyset, \ \a \not\subset \b $
        \item[(c)] $ \a \subset \b $
    \end{itemize}
    Cases (a) and (b) follow from the same argument as in the proof of Theorem \ref{thm_no_fix_p_on_boun}, since there exists some $ k_{0} \in \a \backslash \b $
    such that $ supp(\bbf_{k_{0}}) \geq 2 $.

   Let us consider the case (c), i.e. $ \a \subset \b $. Due to our assumption, we have
     \begin{eqnarray} \label{eqn_b_non-em-1}
    supp(\xb_{0}) = supp(V(\xb_{0})) = supp(\{\eb_{k}\}_{k \in \a}) =\a
    \end{eqnarray}
    From Lemma \ref{lem_map_face} one gets that
    \begin{eqnarray}\label{eqn_b_non-em-2}
    supp(V(\xb_{0})) = supp(\{\bbf\}_{k \in \a})
    \end{eqnarray}
    From \eqref{eqn_b_non-em-1}, \eqref{eqn_b_non-em-2} and Remark \ref{rem_OS=Sbasis} we conclude
    that there is a permutation $\pi_\a$ of $\a$ such that
    $$\{ \bbf_{k}\}_{k \in \a}=\{\eb_{\pi_\a(k)}\}_{k \in \a}.$$
    which contradicts to the assumption of the theorem.

    \textbf{Part 2 ($ supp( \cf )  \subset \bn $)}. Since we have already considered all possible situations of $ \a $ and $ \b $, therefore,
    then it is enough for us to consider the following cases:  $ \a \cap \cc_\cf =\emptyset
    $, $ \a \cap \cc_\cf \neq \emptyset, \a \not\subset \cc_\cf $ and $ \a \subset
    \cc_\cf
    $. These cases can be proceeded by the same argument as in the proof of Theorem \ref{thm_no_fix_p_on_boun}.
    This completes the proof.
    \end{proof}

Now we want provide certain examples which satisfy the conditions of
the last theorem.

    \begin{exm}
Let us consider the following orthogonal system:
  \begin{eqnarray*}
&&\bbf_{1} = \left( \dfrac{1}{2}, \dfrac{1}{2}, 0,\dots,\right), \
\bbf_{2} = \eb_{3}, \ \bbf_{3}=\eb_{4}, \bbf_{4}=\eb_{5}, \\[2mm]
&&\bbf_{n} = \left(0,\dots,0, \underbrace{\dfrac{1}{2}}_{2n-1} ,
\underbrace{\dfrac{1}{2}}_{2n}, 0,\dots,\right) \textmd{ for } n\geq
5\end{eqnarray*} Let $ V $ be generated as follows
$V(\eb_k)=\bbf_{k}$, $k\in\bn$.
        One can see that the set $ \b=\{ 2,3,4 \} $ and for any subset $ A \subset \b $ we have
        \[ \{\bbf_{k}\}_{k \in A} \neq \{ \eb_{k} \}_{k \in A} \]
        Then, due to Theorem \ref{thm_b_neq_empty} for any $ \a \subset \bn $, we have $ Fix(V) \notin \G_{\a} $.
    \end{exm}

\begin{exm} Let us consider the following orthogonal system:
  \begin{eqnarray*}
&&\bbf_{1} = \left( \dfrac{1}{2}, \dfrac{1}{2}, 0,\dots,\right), \
\bbf_{2} = \eb_{6}, \ \bbf_{3}=\eb_{7}, \bbf_{4}=\eb_{8}, \\[2mm]
&&\bbf_{n} = \left(0,\dots,0, \underbrace{\dfrac{1}{2}}_{2n-1} ,
\underbrace{\dfrac{1}{2}}_{2n}, 0,\dots,\right) \textmd{ for } n\geq
5\end{eqnarray*} Let $ V $ be generated as follows
$V(\eb_k)=\bbf_{k}$, $k\in\bn$.
    One can see that $ \b=\{ 2,3,4 \} $ and $ \cc_\cf = \{ 3,4,5 \} $. Moreover, one has for any subset $ A \subset \b $
    \[ \{\bbf_{k}\}_{k \in A} \neq \{\eb_{k} \}_{k \in A} \]
    Then, due to Theorem \ref{thm_b_neq_empty} for any $ \a \subset \bn $, we have $ Fix(V) \notin \G_{\a} $.
\end{exm}

It is well-known that an infinite-dimensional simplex $S$ is not
compact either in $ \ell_{1} $ topology, nor in a weak topology,
therefore, the existence of a fixed point of any QSO $ V $ defined
on $ S $ is not always true.

\begin{exm} Let us consider an OP QSO $V$ defined by
$$
V(x_1,x_2,\cdots,x_n,\cdots)=(0,x_1,x_2,\cdots,x_n,\cdots)
$$
where $(x_n)\in S$. It is easy to see that this operator has no
fixed points belonging to $S$.
\end{exm}

Next result provides a sufficient condition for the existence of a
fixed point of OP QSO.

\begin{prop}
Let $ V \in \cv_{\cf} $ with $ supp(\cf)=\bn$. If $ \b \neq
\emptyset $ and there exists a subset $ \a \subseteq \b $ with  $
|\a| < \infty $ such that \[ \{\eb_{\pi(k)}\}_{k \in \a} =
\{\bbf_{k}\}_{k \in \a} \] for some permutation $\pi$ of $\a$. Then
there exists a fixed point $ \xb_{0} \in \G_{\a}$.
\end{prop}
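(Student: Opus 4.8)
The plan is to restrict the operator $V$ to the finite-dimensional face $\G_\a$ and show it acts there as a genuine QSO on a finite simplex which must have a fixed point. First I would observe that by Remark~\ref{rem_map_supp_subset_bn}, the hypothesis $\{\eb_{\pi(k)}\}_{k\in\a}=\{\bbf_{k}\}_{k\in\a}$ for a permutation $\pi$ of $\a$ means precisely that $supp(\{\bbf_k\}_{k\in\a})=\a$, so by Lemma~\ref{lem_map_face} (applied with this $\a$, using $supp(\cf)=\bn$) we have $V(\G_\a)\subset\G_{\a'}$ where $\a'=\bigcup_{\ell\in\a}supp(\bbf_\ell)=supp(\{\bbf_k\}_{k\in\a})=\a$. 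Hence $V$ maps the face $\G_\a$ into itself, and since $|\a|<\infty$, $\G_\a$ is a compact convex finite-dimensional simplex.

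Next I would write out the restriction explicitly. For $\xb\in\G_\a$ and $k\in\a$, say $k\in supp(\bbf_\ell)$ with $\ell\in\a$; since $|supp(\bbf_\ell)|=1$ here, in fact $\bbf_\ell=\eb_{\pi(\ell)}$ so $supp(\bbf_\ell)=\{\pi(\ell)\}$ and $f_{\ell,\pi(\ell)}=1$. Using the form in Remark~\ref{cor_OP_N}(i),
\[
V(\xb)_{\pi(\ell)} = x_\ell\Bigl(1+\sum_{i\in\a}a_{i\ell}^{(\pi(\ell))}x_i\Bigr),
\]
which depends only on the finitely many coordinates $(x_i)_{i\in\a}$. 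Thus $V|_{\G_\a}$ is a quadratic map of the $|\a|$-dimensional simplex into itself, given by heredity coefficients $\{P_{ij,k}\}_{i,j,k\in\a}$ which satisfy \eqref{eqn_coef_cond} relative to the index set $\a$ (stochasticity within $\a$ follows because all the mass stays in $\a$). A continuous map of a nonempty compact convex set into itself has a fixed point by the Brouwer fixed point theorem; I would invoke this to get $\xb_0\in\G_\a$ with $V(\xb_0)=\xb_0$, which is then also a fixed point of $V$ on $S$ since the coordinates outside $\a$ are zero and stay zero.

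The main obstacle — really the only subtle point — is verifying that the truncated coefficient array is genuinely stochastic on $\a$, i.e. that $\sum_{k\in\a}P_{ij,k}=1$ for $i,j\in\a$, rather than merely $\le 1$; for Brouwer one only needs $V(\G_\a)\subseteq\G_\a$, which Lemma~\ref{lem_map_face} already delivers via $\a'=\a$, so strictly speaking full stochasticity on $\a$ is not even needed and the self-map property suffices. I should also double check that $\G_\a$ being a face of an infinite simplex is affinely a standard $(|\a|-1)$-simplex (immediate), and that $V$ restricted there is continuous (it is a polynomial map). No other step requires more than routine verification.
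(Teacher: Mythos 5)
Your proof is correct and follows essentially the same route as the paper: establish $V(\G_\a)\subseteq\G_\a$ (you via Lemma \ref{lem_map_face} with $\a'=\pi(\a)=\a$, the paper via the explicit form \eqref{eqn_opr_rep} from Remark \ref{cor_OP_N}) and then apply the Brouwer fixed-point theorem on the compact finite-dimensional face. Your observation that only the self-map property, not full stochasticity of the truncated coefficients on $\a$, is needed is accurate and matches what the paper actually uses.
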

\begin{proof}
Let $ \a = \{ i_{1},\dots,i_{n}\} \subseteq \b $. By the definition
of $ V $ we infer that
\begin{eqnarray}\label{eqn_i_n_j_fix_in_f}
V(\eb_{i_k}) = \eb_{\pi(i_k)}  \textmd{ for all } \
k\in\{1,\dots,n\}
\end{eqnarray}
and
\[ V(\eb_{m}) = \bbf_{m} \textmd{ for all } m \in \bn \backslash \a\]

 %(i.e., $ \pi(i_{1}) = i_{2}, \pi(i_{2}) = i_{3},\dots, \pi(i_{n}) = i_{1} $).
Due to Corollary \ref{cor_OP_N}  the operator $ V $ can be written
in the following form, for any $ \xb \in  \G_{\a} $
\begin{eqnarray}\label{eqn_opr_rep}
\left\{
\begin{array}{lll}
V(\xb)_{\pi(i)} &=& x_{i}\left( 1+ \sum\limits_{\stackrel{\ell\in \a}{\ell \neq i}} \left(   2P_{\ell i,j} -1 \right)x_{\ell} \right), \ \ i\in\a   \\
V(\xb)_{k} & = & 0 \ \ \textmd{ if }\ \ k \notin \a
\end{array}
\right.
\end{eqnarray}
 This implies that $V(\G_\a)\subset\G_\a$. The compactness of $\G_\a$ with the Brouwer fixed-point Theorem yields the existence of a fixed point
$ \xb_{0}\in\G_\a$ of $V$. This finishes the proof.
\end{proof}

Immediately from the last proposition, one concludes the following
corollary.

\begin{cor}
Let $ V \in \cv_{\cf} $ with $ supp(\cf) \subset \bn $ and $ \b \neq
\emptyset $. If $ V(\xb)_{c} = 0 $ for any $ c \in \cc_\cf $ and
there exists a subset $ \a \subseteq \b $ with  $ |\a| < \infty $
such that \[ \{\eb_{\pi(k)}\}_{k \in \a} = \{\bbf_{k}\}_{k \in \a}
\] for some permutation $\pi$ of $\a$. Then there exists a fixed
point $ \xb_{0} \in \G_{\a}$.
\end{cor}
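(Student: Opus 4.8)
The plan is to mimic the proof of the preceding Proposition, the point being that the extra hypothesis ``$V(\xb)_{c}=0$ for all $c\in\cc_\cf$'' is exactly what compensates for the fact that now $supp(\cf)\subsetneq\bn$, and thereby lets us show that $V$ maps the finite-dimensional face $\G_\a$ into itself. Once that is established, the existence of a fixed point in $\G_\a$ follows from Brouwer's theorem, just as before.

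First I would record the structure coming from $\a\subseteq\b$. Since $\{\bbf_{k}\}_{k\in\a}=\{\eb_{\pi(k)}\}_{k\in\a}$ for a permutation $\pi$ of $\a$, we have $V(\eb_{k})=\bbf_{k}=\eb_{\pi(k)}$ for every $k\in\a$; in particular $supp(\bbf_{k})=\{\pi(k)\}$ for $k\in\a$, and $\pi(\a)=\a$. No assumption is made on $\bbf_{k}$ for $k\notin\a$.

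Next I would invoke the explicit description of OP QSOs. By Theorem~\ref{thm_des_OP_su_neq_N} (equivalently Corollary~\ref{cor_OP_hereCo}), for $m\in supp(\bbf_{k})$ one has $V(\xb)_{m}=x_{k}\big(f_{k,m}+\sum_{i}a_{ik}^{(m)}x_{i}\big)$, while for $c\in\cc_\cf$ the value $V(\xb)_{c}$ is one of the forms in part (b) — and by hypothesis it is $0$. Now take an arbitrary $\xb\in\G_\a$, so $x_{k}=0$ for $k\notin\a$, and consider a coordinate $m$ with $V(\xb)_{m}\neq0$. If $m\in\cc_\cf$ this is excluded by hypothesis; if $m\in supp(\bbf_{\ell})$ with $\ell\notin\a$, then $V(\xb)_{m}=x_{\ell}\big(\cdots\big)=0$, again a contradiction; hence $m\in supp(\bbf_{\ell})$ for some $\ell\in\a$, i.e.\ $m=\pi(\ell)\in\pi(\a)=\a$. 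Thus $supp(V(\xb))\subseteq\a$, and since $V(\xb)\in S$ always, this gives $V(\xb)\in\G_\a$. Hence $V(\G_\a)\subseteq\G_\a$.

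Finally, because $|\a|<\infty$, the face $\G_\a$ is a compact convex subset of $S$ (an $(|\a|-1)$-dimensional simplex), and $V|_{\G_\a}$ is a polynomial in the finitely many coordinates $(x_{k})_{k\in\a}$, hence a continuous self-map of $\G_\a$. Brouwer's fixed-point theorem then yields $\xb_{0}\in\G_\a$ with $V(\xb_{0})=\xb_{0}$. I do not expect a genuine obstacle: the only point needing care is the bookkeeping of which output coordinates can be nonzero on $\G_\a$, and this is precisely where the hypothesis $V(\xb)_{c}=0$ on $\cc_\cf$ is used to prevent mass from escaping $\G_\a$; the remainder is the compactness argument already carried out in the preceding Proposition.
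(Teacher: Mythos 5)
Your proposal is correct and follows the paper's intended route: the paper gives no separate proof, stating the corollary as immediate from the preceding proposition, and your argument is exactly the needed elaboration — the hypothesis $V(\xb)_{c}=0$ on $\cc_\cf$ together with part (a) of Theorem~3.3 and $supp(\bbf_{\ell})=\{\pi(\ell)\}$ for $\ell\in\a$ gives $V(\G_\a)\subseteq\G_\a$, after which Brouwer's theorem on the compact finite-dimensional face $\G_\a$ applies as in the proposition. No gaps.
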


We provide an example of OP QSO that has fixed point.
\begin{exm}
   Let us consider the following orthogonal system:
    \[ \bbf_{1}=\eb_{1}, \ \bbf_{2} = \eb_{2},\ \bbf_{n} = \left(0,\dots,0, \underbrace{\dfrac{1}{2}}_{2n-3} , \underbrace{\dfrac{1}{2}}_{2n-2}, 0,\dots,\right) \textmd{ for } n\geq 3 \]
Now let $V$ be an OP QSO such that
    \[ V(\eb_{1}) =\bbf_{2}, \ \ V(\eb_{2})=\bbf_{1}, \ \ V(\eb_{k}) = \bbf_{k-1} \ \textmd{ for } \ k \geq 3 \]
One can see that $\b=\{1,2\}$, and for a permutation $\pi(1)=2$,
$\pi(1)=2$, we have $\{\eb_{\pi(k)}\}_{k \in \b} = \{\bbf_{k}\}_{k
\in \b}$. For any $ \xb \in ri\G_{\b} $, using Corollary
\ref{cor_OP_N}, one gets
    \begin{eqnarray}\label{eqn_sys_1221}
    \left\{
    \begin{array}{lll}
    V(\xb)_{1} &=& x_{2} \left( 1+ (2P_{12,1}-1)x_{1} \right)  \\
    V(\xb)_{2} &=& x_{1} \left( 1+ (2P_{21,2}-1)x_{2} \right)
    \end{array}
    \right.
    \end{eqnarray}
    In particular, assume that $ P_{12,1} = P_{21,2} = \dfrac{1}{2} $, then clearly we have $ \left( \dfrac{1}{2} ,\dfrac{1}{2}\right) $ as a
    fixed point for the system \eqref{eqn_sys_1221}.
    Clearly, $ \left( \dfrac{1}{2}, \dfrac{1}{2}, 0,\dots, \right) \in \G_{\{ 1,2 \}} $ is a fixed point for $ V $.
\end{exm}

\section*{Acknowledgments}
The present work is supported by the UAEU "Start-Up" Grant, No.
31S259.

\end{document}